\DeclareOldFontCommand{\bf}{\normalfont\bfseries}{\mathbf}
\algnewcommand\algorithmicinput{\textbf{Input:}}
\algnewcommand\algorithmicoutput{\textbf{Output:}}
\algnewcommand\Input{\item[\algorithmicinput]}
\algnewcommand\Output{\item[\algorithmicoutput]}
\tikzstyle{vertex}=[circle, draw, inner sep=0pt, minimum size=6pt]
\newcommand{\vertex}{\node[vertex]}
\tikzset{->-/.style={decoration={
  markings,
  mark=at position .5 with {\arrow{>}}},postaction={decorate}}}
\newcommand{\m}[1]{}
\declaretheorem[parent=section,thmbox=M]{theorem}
\declaretheorem[numberlike=theorem,thmbox=M]{corollary}
\declaretheorem[numberlike=theorem,thmbox=M]{conjecture}
\declaretheorem[numberlike=theorem,thmbox=M]{definition}
\newtheorem{claim}{Claim}[theorem]
\declaretheorem[numberlike=theorem]{lemma}
\newcommand{\Gya}{Gy\'arf\'as\xspace}
\newcommand{\Erd}{Erd\H os\xspace}
\newcommand{\mc}{\mathcal}
\newcommand{\Ra}{\Rightarrow}
\newcommand{\ora}[1]{\overrightarrow{#1}}
\DeclareMathOperator{\dic}{\ora \chi}
\DeclareMathOperator{\diomega}{\ora \omega}
\newcommand{\intv}[2]{\llbracket #1 , #2 \rrbracket}
\tikzstyle{vertex}=[circle,draw, top color=gray!5, 
\tikzstyle{arc}=[->, > = latex',  thick]
\tikzstyle{edge}=[thick, blue]
\def\centerarc[#1](#2)(#3:#4:#5) {\draw[#1] ($(#2)+({#5*cos(#3)},{#5*sin(#3)})$) arc (#3:#4:#5); }
\newenvironment{proofclaim}
	{\noindent {\bf Proof of Claim
	:}}
	{\hfill $\square$ \par\vspace{11pt}}
\title{Computing the clique number of tournaments}
\author{Guillaume Aubian\footnote{Supported by project 22-17398S (Flows and cycles in graphs on surfaces) of Czech Science
Foundation.}}
\date{}
\affil[]{Charles University, Prague, Czech Republic.}
\begin{document}

\maketitle

\begin{abstract}
The clique number of a tournament is the maximum clique number of a graph formed by keeping backwards arcs in an ordering of its vertices. 
We study the time complexity of computing the clique number of a tournament and prove that, for any integer $k \geq 3$, deciding whether a tournament has clique number at most $k$ is NP-complete. This answers an interrogation of Nguyen, Scott and Seymour. To do so, we make use of a construction which we then modify to provide a counterexample to a conjecture of Aboulker, Aubian, Charbit and Lopes.
\end{abstract}

\tableofcontents


\section{Introduction}


Given an undirected graph $G$, its clique number, denoted $\omega(G)$, is the maximum number of pairwise adjacent vertices of $G$. Its chromatic number, denoted $\chi(G)$, is the minimum size of a partition of $V(G)$ in stable sets. A tournament is an orientation of a complete graph.

The clique number is a central notion in graph theory, finding applications in domains as diverse as chemistry \cite{RWCDH03}, automatic test pattern generation \cite{HP98}, bioinformatics \cite{DS86} etc. Computing the clique number of a graph has been proven to be NP-complete \cite{K72}. However, for a fixed integer $k$, whether a graph on $n$ vertices has clique number at most $k$ can be decided in time complexity $O(n^k)$, by checking all subsets of at most $k$ vertices.

Given a tournament $T$ and an ordering $\prec$ of $V(T)$, we define the backedge graph of $T$ relative to $\prec$ as $T^\prec = (V(T), \{uv \in A(T) \mid v \prec u\})$.
In \cite{AACL23}, Aboulker, Aubian, Charbit and Lopes formally study the clique number of tournaments defining it as the minimum over all orderings $\prec$ of $V(T)$ of the clique number of $T^\prec$, and denoting it as $\diomega(T)$. Note that while they formally introduced it, the same notion was already studied in \cite{NSS23} by Nguyen, Scott and Seymour. In the beginning of Section~9, they wonder what is the time complexity of computing the clique number of tournaments.

In this paper, we prove that, akin to the undirected case, this problem is NP-complete:

\begin{theorem}
Computing the clique number of tournaments is NP-complete.
\end{theorem}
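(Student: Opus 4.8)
The plan is to establish the two sides of NP-completeness separately, and I will reduce to the case of a fixed threshold $k$. Fix $k \geq 3$. Membership of the language $\{T : \diomega(T) \leq k\}$ in NP is immediate: a certificate is an ordering $\prec$ of $V(T)$, and since $\diomega(T)=\min_\prec \omega(T^\prec)$, it suffices to check in time $O(n^{k+1})$ (polynomial for fixed $k$) that $T^\prec$ has no clique on $k+1$ vertices, i.e. that $\omega(T^\prec)\leq k$. NP-hardness of even one of these languages shows that computing $\diomega$ is NP-hard, and for fixed $k$ the problem is then NP-complete; so the remaining task is an NP-hardness reduction.

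\textbf{The reduction.} I would reduce from $3$-SAT (or a convenient variant such as NAE-$3$-SAT). From a formula $\varphi$ one builds, in polynomial time, a tournament $T_\varphi$ with $\diomega(T_\varphi) \leq k$ iff $\varphi$ is satisfiable. The engine is a small ``choice gadget'': a tournament $H$ with two distinguished vertices $p,q$ and two prescribed ``good'' partial patterns (morally $p \prec q$ and $q \prec p$) such that (i) if $\prec$ restricted to $V(H)$ extends one of the two good patterns, then $H^\prec$ contains no $K_{k+1}$, while (ii) every other ordering of $V(H)$ forces a $K_{k+1}$ inside $H^\prec$. One installs a variable gadget per variable (its state recording a truth value), a clause gadget per clause, built so that the only orderings of it with no $K_{k+1}$ are those in which some incident literal is ``true'', and a family of cross-gadget arcs chosen so that every potential $K_{k+1}$ of $T_\varphi^\prec$ is confined to a single gadget. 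To obtain all $k \geq 3$ uniformly I would either parametrise $H$ so that its forbidden configuration has size exactly $k+1$, or settle $k=3$ first and then pad: join to $T$ a fixed transitive tournament on $k-3$ new vertices arranged so that $\diomega$ goes up by exactly $k-3$.

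\textbf{Correctness.} For the ``if'' direction, a satisfying assignment prescribes the good pattern of each gadget; a case analysis over where a hypothetical $K_{k+1}$ of $T_\varphi^\prec$ could sit, cut down by the confinement property of the cross-gadget arcs to purely intra-gadget checks, shows none appears. For the ``only if'' direction, from an ordering with $\omega(T_\varphi^\prec)\leq k$, property (ii) forces every variable gadget into one of its two good patterns and every clause gadget to ``see'' a true literal, and reading off the patterns gives a satisfying assignment.

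\textbf{The main obstacle.} Everything hinges on designing $H$ and the clause gadget so that (i) and (ii) hold simultaneously and, crucially, robustly: (ii) must survive the presence of the rest of $T_\varphi$, so that a bad local ordering still forces a clique even when its vertices have many outside neighbours, while (i) together with the wiring must preclude cliques straddling several gadgets. Producing a single explicit construction meeting all of these constraints, and then carrying out the lengthy but routine clique case analysis, is where essentially all the difficulty lies; this is presumably also the construction that is afterwards perturbed to refute the conjecture of Aboulker, Aubian, Charbit and Lopes.
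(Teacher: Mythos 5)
Your high-level architecture (reduction from $3$-SAT with variable and clause gadgets, plus an NP-membership certificate consisting of an ordering checked in time $O(n^{k+1})$ for fixed $k$) matches the paper's, but the proposal stops exactly where the proof begins: no gadget is actually constructed, and the two concrete mechanisms you would need are either absent or wrong. First, the confinement of cliques to single gadgets and, more importantly, the ability to \emph{force} the relative order of vertices in different gadgets is not something you can get for free from ``cross-gadget arcs'': the paper has to build, for each $k$, a tournament $T_k$ with $\diomega(T_k)=k$ and the Ramsey-type property that every bipartition of its vertex set leaves a copy of a clique-number-$k$ tournament on one side (Lemma~\ref{lem:cut}); placing such a $T_k$ inside $u^+\cap v^-$ is what forces $u\prec v$ in every ordering of clique number at most $k$ (Lemma~\ref{lem:complete}). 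Without an ingredient of this kind your clause gadgets cannot be guaranteed to sit after your variable gadgets, and the coupling arcs (the paper reverses four arcs between the two endpoints of a literal arc and the two endpoints of a clause arc, so that two backward arcs create a $K_4$) have no reason to be the only possible source of a large clique.

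Second, your proposed uniformisation over $k\geq 3$ by ``padding with a fixed transitive tournament on $k-3$ new vertices arranged so that $\diomega$ goes up by exactly $k-3$'' does not work: transitive tournaments have clique number $1$, and no way of attaching one raises $\diomega$ at all, let alone by a prescribed amount while preserving the set of optimal orderings of the original tournament restricted to its vertices. The paper instead proves an uplift lemma (Lemma~\ref{lem:uplift}): if $\diomega(D)=k-1$ then $\diomega(\Delta(1,D,T_k))=k$ \emph{and} the orderings of $\Delta(1,D,T_k)$ achieving clique number $k$ restrict on $V(D)$ exactly to the $\diomega$-orderings of $D$. This preservation of the optimal-ordering set is essential, both for lifting the computer-found clique-number-$2$ gadgets on $8$ and $9$ vertices to clique number $3$, and for passing from $k=3$ to general $k$. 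Finally, note that the existence of the base gadgets themselves (a variable tournament in which exactly one of two disjoint arcs is forward in every optimal ordering, and a clause tournament in which at least one of three disjoint arcs is backward) is established in the paper only by exhaustive computer search; your proposal correctly identifies this as the crux but supplies no candidate, so the argument as written cannot be completed.
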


We prove a stronger theorem. Let $k \geq 1$, the problem \textsc{$k$-DIOMEGA-ORDERING} is defined as follows:

\bigskip

\textsc{$k$-DIOMEGA-ORDERING}

\textbf{Input:} A tournament $T$

\textbf{Output:} Does $\diomega(T) \leq k$ ?

\bigskip

Unlike the undirected case, we prove the following:
\begin{theorem}
Let $k \geq 3$. \textsc{$k$-DIOMEGA-ORDERING} is NP-complete.
\end{theorem}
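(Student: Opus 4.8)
The plan is to establish membership in NP, which is immediate, and then NP-hardness via a gadget reduction. For membership, take the ordering $\prec$ of $V(T)$ itself as the certificate: from $\prec$ one builds the backedge graph $T^\prec$ in polynomial time, and since $k$ is a fixed constant one tests $\omega(T^\prec)\le k$ by examining all $\binom{|V(T)|}{k+1}$ vertex subsets, in time $O(|V(T)|^{k+1})$. So $\diomega(T)\le k$ is witnessed by such a $\prec$, and \textsc{$k$-DIOMEGA-ORDERING} is in NP.

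For hardness I would reduce from $k$-colourability of graphs, which is NP-complete precisely in the range $k\ge 3$ of the statement (a bounded-occurrence variant of $3$-SAT would be an equally workable source, at the price of a padding step for $k>3$). From a graph $G$ I construct in polynomial time a tournament $T_G$ such that $\diomega(T_G)\le k$ if and only if $\chi(G)\le k$. Two ingredients go into $T_G$. First, for each vertex $v$ of $G$ a \emph{rigid choice gadget} $\Gamma_v$: a tournament of bounded size designed so that every ordering $\prec$ of $\Gamma_v$ with $\omega(\Gamma_v^{\prec})\le k$ is, up to inessential rearrangement, in one of exactly $k$ prescribed states (one per colour), every other ordering of $\Gamma_v$ already containing a $(k+1)$-clique in its backedge graph. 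Second, for each edge $uv$ of $G$ an \emph{equality gadget} attached to $\Gamma_u$ and $\Gamma_v$, arranged so that a $(k+1)$-clique in the backedge graph is unavoidable exactly when $\Gamma_u$ and $\Gamma_v$ sit in the same state. The delicate structural point is the wiring between distinct gadgets: outside any single gadget the arcs joining two gadgets are made essentially one-directional, consistent with a fixed global layering of the gadgets, so that no reverse-transitive subtournament — equivalently no clique of a backedge graph — can use vertices from more than one gadget. This tournament is precisely the one that, after a further modification, produces the promised counterexample to the conjecture of Aboulker, Aubian, Charbit and Lopes.

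Correctness then runs as usual. If $\chi(G)\le k$, fix a proper $k$-colouring, place each $\Gamma_v$ in the state of the colour of $v$, place each equality gadget in a non-violating configuration (possible since the two ends of every edge get distinct colours), and concatenate these partial orders in accordance with the layering; any $(k+1)$-clique of the resulting backedge graph would, by the one-way wiring, lie inside a single gadget and hence be excluded by that gadget's design, so $\diomega(T_G)\le k$. Conversely, from an ordering $\prec$ with $\omega(T_G^{\prec})\le k$, the rigidity of each $\Gamma_v$ reads off a colour of $v$; this colouring is proper, since a monochromatic edge $uv$ would leave $\Gamma_u$ and $\Gamma_v$ in the same state and its equality gadget would then force a $(k+1)$-clique in $T_G^{\prec}$, a contradiction. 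Hence $\chi(G)\le k$, and the reduction is correct.

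The real difficulty — the only genuinely hard step — is the joint design of the choice gadgets, the equality gadgets, and the inter-gadget layering so as to meet two requirements that pull against each other: every equality gadget must \emph{force} a large backedge clique whenever it is violated, yet $T_G$ must still, for each proper colouring, admit a global ordering with \emph{no} large backedge clique. Devices that force cliques locally tend to spawn cliques across gadget boundaries, so which gadget dominates which in the layering, and how much internal slack each gadget is allowed, must be tuned carefully; verifying that $\Gamma_v$ has exactly $k$ surviving states, with no spurious extra one, is part of the same balancing act. Once the gadgets are built with parameters depending on $k$, the argument is uniform in $k$, and no separate treatment of larger $k$ is required.
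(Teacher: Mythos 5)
Your NP-membership argument matches the paper's and is fine. The hardness part, however, is a plan rather than a proof: everything that actually carries the mathematical weight --- the ``rigid choice gadget'' with exactly $k$ surviving states, the ``equality gadget'' that forces a $(k+1)$-clique exactly when two neighbouring gadgets agree, and the one-directional inter-gadget wiring --- is asserted to exist but never constructed, and you say so yourself (``the only genuinely hard step''). This is precisely where the paper has to work hardest. Its gadgets are not designed by hand at all: the variable gadget is a specific $9$-vertex tournament and the clause gadget a specific $8$-vertex tournament, both found and verified by exhaustive computer search over all orderings, and they only achieve \emph{two} states (forward/backward status of a designated arc), not $k$ states. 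A gadget whose $\diomega$-orderings fall into exactly $k$ rigid classes is a much stronger object, and nothing in your sketch indicates how to obtain it; without it the reduction from $k$-colourability does not get off the ground. Moreover, your claim that ``no clique of a backedge graph can use vertices from more than one gadget'' is in direct tension with the equality gadget's job, which is precisely to create a forbidden clique out of the interaction between two gadgets; in the paper this tension is resolved very concretely: exactly four arcs are reversed between a variable gadget and a clause gadget per literal occurrence, so that a $K_4$ arises if and only if both designated arcs are backward, and a separating copy of a special tournament $T_3$ together with an order-forcing lemma guarantees that all variable gadgets precede all clause gadgets in any good ordering. None of these mechanisms has an analogue in your write-up.

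There is also a structural gap for general $k$. You claim the argument is ``uniform in $k$,'' but the paper does not reprove hardness for each $k$: it proves the case $k=3$ by reduction from $3$-SAT and then lifts to every $k>3$ via an uplift lemma, which embeds a digraph $D$ with $\diomega(D)=k-1$ into $\Delta(1,D,T_k)$ so that the good orderings of the big tournament restrict exactly to the good orderings of $D$. That lemma in turn rests on a Ramsey-type construction (Lemma~\ref{lem:cut}) producing a tournament $T_k$ with $\diomega(T_k)=k$ such that every vertex bipartition leaves a copy of a clique-number-$k$ tournament on one side; this is what lets one force the relative order of prescribed vertex pairs. Your proposal contains no substitute for this machinery, and designing $k$-state gadgets directly for each $k$ (as your uniformity claim implicitly requires) would be substantially harder than the paper's route. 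In short: the reduction skeleton is reasonable, but the proof is missing its gadgets, and the gadgets are the theorem.
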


Note that, since \textsc{$1$-DIOMEGA-ORDERING} is equivalent to testing whether a given tournament is acyclic, and thus is polynomial, only the case $k = 2$ remains open:

\begin{conjecture}
\textsc{$2$-DIOMEGA-ORDERING} is NP-complete.
\end{conjecture}

\bigskip

Every graph $G$ satisfies $\chi(G) \geq \omega(G)$. However, there exist graphs with $\chi(G) > \omega(G)$ -- for example a cycle on $5$ vertices. Another question is whether there exists a function $f$ such that for any graph $G$, $\chi \leq f(\omega(G))$. This question was answered in the negative by Zykov in \cite{Z49}. However, various graph classes $\mc C$ actually satisfy that for any graph $G \in \mc C$, $\chi(G) \leq f(\omega(G))$ for some function $f$. Such graph classes are said to be $\chi$-bounded.

\medskip

$\chi$-bounded classes of graphs have been widely studied, see for example \cite{SS20} for a nice survey on $\chi$-boundedness. Since most graph classes $\mc C$ that appear in the literature are preserved under taking induced subgraphs, and thus can be defined from a set $S$ as the class of graphs not containing a graph in $S$ as an induced subgraph, the following question arises naturally: at which condition on $S$ is $\mc C$ $\chi$-bounded? In \cite{E59}, \Erd proved that, if $S$ is finite and $\mc C$ is $\chi$-bounded, $S$ must contain at least one forest. \Gya and Sumner thus asked whether the converse holds:

\begin{conjecture}[\Gya, 1975 \& Sumner, 1981]
 For any forest $F$, the class of graphs not containing $F$ as an induced subgraph is $\chi$-bounded.
\end{conjecture}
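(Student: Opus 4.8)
The plan is to prove the statement by induction on the structure of $F$, peeling it down to ever simpler trees and ultimately to stars and paths, for which the claim is already known.

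First I would reduce to the case where $F$ is a tree. Given a forest $F$ with components $T_1,\dots,T_r$, let $T$ be the tree obtained from the disjoint union of the $T_i$ by adding one new vertex $v$ adjacent to exactly one vertex of each $T_i$. If a graph $G$ contains an induced copy of $T$, then deleting the image of $v$ yields an induced copy of $F$, since in $T$ every edge between distinct components passes through $v$; hence $\Forb(F)\subseteq\Forb(T)$, and as a subclass of a $\chi$-bounded class is $\chi$-bounded, it suffices to treat trees. The easiest trees are the stars $K_{1,t}$: excluding $K_{1,t}$ as an induced subgraph forces maximum degree at most $t-1$, so $\chi\le t$.

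For a general tree $T$ I would run a breadth-first layering argument in the spirit of \Gya's proof for paths. Fix $G\in\Forb(T)$ with $\chi(G)$ large, pick any vertex, and partition $V(G)$ into BFS layers $L_0,L_1,L_2,\dots$; some layer $L_i$ carries most of the chromatic number. Inside $L_i$ one wants to locate a large, highly connected piece and then grow an induced copy of $T$ \emph{upward} through $L_{i-1},L_{i-2},\dots$, exploiting the crucial feature that a vertex of $L_i$ has all of its ``ancestor'' neighbours confined to $L_{i-1}$. The construction proceeds by reattaching leaves of $T$ one at a time: removing a leaf $\ell$ gives a smaller tree $T'=T-\ell$, and one argues that a high-chromatic $G$ without an induced $T$ must nonetheless contain so many suitably placed induced copies of $T'$ that one of them is extendable, contradicting $G\in\Forb(T)$. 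Each reattachment step is driven by a Ramsey-type dichotomy — either many candidate vertices of the current layer share a common neighbour one layer up, yielding a large induced star on which to hang the leaf, or they can be thinned to a structurally tame but still high-chromatic set — with $K_2$ as the trivial base of the induction. This is exactly the route by which the conjecture has been verified in special cases: paths (\Gya, 1987), trees of radius two (Kierstead and Penrice, 1994), brooms and spiders, and subdivided stars, among others.

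The hard part — and the reason the conjecture remains open in general — is the reattachment step. When passing from $T$ to $T'=T-\ell$ and turning an induced copy of $T'$ into one of $T$, one must control the adjacency between the new leaf and the \emph{entire} already-built copy of $T'$, not merely the single vertex to which $\ell$ attaches; and in a general tree that copy can be smeared across many BFS layers and through neighbourhoods that themselves have large chromatic number. Keeping the partial copy induced while simultaneously retaining a high-chromatic reservoir from which to draw each successive vertex is precisely what no one knows how to do uniformly over all trees, which is why I would expect this plan to go through only for the restricted families above and to stall, at the inductive extension, in the general case.
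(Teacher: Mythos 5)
There is no proof to compare against: the paper states this only as a conjecture (the \Gya--Sumner conjecture) and explicitly notes that it ``remains largely open,'' citing the survey \cite{SS20}. Your proposal does not prove it either, and you say so yourself, so the honest verdict is that there is a genuine gap --- namely, the entire inductive extension step. The parts you do carry out are fine: the reduction from forests to trees (adding a vertex $v$ adjacent to one vertex of each component $T_i$ yields a tree $T$ with $T-v=F$ induced, so $\Forb(F)\subseteq\Forb(T)$ and $\chi$-boundedness passes to subclasses) is correct and standard, and the star case via bounded maximum degree is correct.

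The missing idea is precisely the one you flag: when you delete a leaf $\ell$ from $T$ and try to extend an induced copy of $T'=T-\ell$ back to $T$, you must simultaneously (i) keep the new vertex non-adjacent to every vertex of the partial copy except the intended attachment point, and (ii) retain a high-chromatic reservoir from which to continue. The BFS-layering and Ramsey-type dichotomy you describe controls this only when the partial copy is confined to a bounded number of layers (which is why paths, radius-two trees, brooms, and subdivided stars are known), but for a general tree the copy spreads across layers whose neighbourhoods themselves have unbounded chromatic number, and no one knows how to thin the reservoir without destroying either the inducedness of the partial copy or its extendability. So the plan terminates exactly where the open problem begins; it cannot be accepted as a proof of the statement, and the paper does not claim one.
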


This conjecture remains largely open. See \cite{SS20}.

\bigskip

In $1982$, Victor Neumann-Lara introduced in \cite{NL82} a directed analogue of the chromatic number: the dichromatic number of a directed graph $D$, denoted $\dic(D)$, is the minimum size of a partition of its vertices into acyclic subsets\cite{NL82}. This notion has been reintroduced by Mohar in \cite{M03}, and since then this notion has known a growing interest, mainly in trying to translate undirected results to a directed setting.

\medskip

Similarly to the undirected setting, Aboulker, Aubian, Charbit and Lopes have then defined in \cite{AACL23} a notion of $\chi$-boundedness for tournaments: a tournament class $\mc T$ is $\dic$-bounded if there exists a function $f$ such that for any tournament $T \in \mc T$, $\dic(T) \leq f(\diomega(T))$. A question that arises naturally is the following: which tournaments $T$ are such that the class of tournaments not containing $T$ are $\dic$-bounded? In a similar flavour to the directed case, they proved that such a $T$ must contain a backedge graph that is a forest. This directly implies an analogue of \Gya-Sumner conjecture for tournaments:

\begin{conjecture}
Let $T$ be a tournament with a backedge graph that is a forest. The class of tournaments not containing $T$ as a subgraph is $\dic$-bounded.
\end{conjecture}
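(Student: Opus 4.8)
The plan is to treat this as a transfer problem from the undirected \Gya--Sumner conjecture. Fix the tournament $T$ on $\{1,\dots,m\}$, write $\prec_0$ for the ordering $1<\cdots<m$, and let $F=T^{\prec_0}$ be its backedge graph, a forest on $\{1,\dots,m\}$. Let $S$ be any $T$-free tournament, choose an ordering $\prec$ of $V(S)$ attaining the minimum defining $\diomega$, and put $G:=S^{\prec}$, so $\omega(G)=\diomega(S)=:k$. Any proper colouring of $G$ partitions $V(S)$ into colour classes inducing only forward arcs of $\prec$, i.e.\ acyclic subtournaments, so $\dic(S)\le\chi(G)$; it therefore suffices to bound $\chi(G)$ by a function of $k$.

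If $G$ contained no induced copy of $F$, this would be immediate from \Gya--Sumner for $F$ --- and unconditional for the forests $F$ (stars, brooms, spiders, \dots) for which \Gya--Sumner is a theorem --- since then $\chi(G)\le f_F(\omega(G))=f_F(k)$. The obstruction is that $T$-freeness of $S$ is strictly weaker than $F$-freeness of $G$: it only rules out copies of $F$ in $G$ that occur in the vertex order that $\prec$ inherits from $\prec_0$. Indeed, if $(G,\prec)$ contained an \emph{ordered} induced copy of $(F,\prec_0)$, say on $w_1\prec\cdots\prec w_m$ realising $F$ edge for edge under $i\mapsto w_i$, then $S[\{w_1,\dots,w_m\}]$ taken with the order $\prec$ would have backedge graph exactly $F$, hence be a copy of $T$, a contradiction. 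So the reduction really asks for an \emph{ordered} \Gya--Sumner statement for the ordered forest $(F,\prec_0)$: ordered graphs omitting it as an ordered induced subgraph are $\chi$-bounded in terms of the ordinary clique number. Granting that and applying it to $(G,\prec)$ finishes the proof.

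I would therefore split the work into (i) the reduction just sketched, and (ii) proving ordered $\chi$-boundedness for ordered forests. For (ii), the route is to replay the known partial progress on \Gya--Sumner inside the ordered category: first ordered stars, then ordered brooms and spiders, then general ordered trees via the ``tree extraction'' technique --- peeling off BFS levels of a high-chromatic ordered graph while bookkeeping the order --- possibly using a median order of $S$ as an auxiliary device for locating the required pattern. An alternative that avoids ordered graphs altogether: in a $T$-free $S$, a large $\chi(G)$ forces many induced copies of $F$ in $G$, each carrying one of the at most $m!$ order-patterns, and one would then want a Ramsey-type recombination turning ``many copies, arbitrary patterns'' into ``one copy in the pattern of $T$''.

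The main obstacle is exactly step (ii). Ordered analogues of chromatic results are delicate, and it is not known whether every ordered forest is ``ordered-$\chi$-bounding'', so a reduction that is soft in the orderings may be unavailable and one could be forced to control the linear order by a genuinely tournament-specific argument. It is worth noting that the implication does not obviously reverse: for an $F$-free graph $G$, the tournament $S$ on $V(G)$ whose backward arcs (in a suitable order) are exactly $E(G)$ is $T$-free and has $\diomega(S)\le\omega(G)$, yet $\dic(S)$ may be far below $\chi(G)$ --- so the tournament conjecture may in fact be strictly weaker than \Gya--Sumner. Either way, managing the interaction between the forbidden pattern and the ambient order is where I expect essentially all the difficulty to concentrate.
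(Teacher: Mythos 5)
The statement you set out to prove is \emph{false}, and the paper's own treatment of it is a refutation, not a proof: Section~4 is devoted to disproving Conjecture~\ref{conj:gyarfas_sumner}. The witness is the circulant tournament $R_5$ of Figure~\ref{fig:circulant} (vertices $1,\dots,5$ with $i\to i+1,i+2 \pmod 5$), whose backedge graph under the ordering $1\prec2\prec3\prec4\prec5$ is the forest with edges $\{1,4\},\{1,5\},\{2,5\}$. The paper then builds, via the operation $\Pi$ of Definition~\ref{def:pi} (a variant of the construction of Lemma~\ref{lem:cut}), the sequence $D_1=C_3$, $D_k=\Pi(D_{k-1})$, which satisfies $\diomega(D_k)=2$ for every $k$ while $\dic(D_k)$ is unbounded, and shows by a structural analysis of strong subtournaments of the $D_k$ (Lemma~\ref{lem:rules}, checked against every $\diomega$-ordering of $R_5$ in Figure~\ref{fig:big_table}) that no $D_k$ contains $R_5$. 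So no completion of your outline can exist; the gap is not a missing lemma but the truth value of the target.

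That said, your step (i) is correct, and read against the paper it localises the failure precisely. The inequality $\dic(S)\le\chi(S^{\prec})$ and the observation that $T$-freeness of $S$ forbids an \emph{ordered} induced copy of $(F,\prec_0)$ in $(S^{\prec},\prec)$ are both fine. Applying your own reduction to $S=D_k$ with an $\diomega$-ordering $\prec$ therefore shows that the ordered Gy\'arf\'as--Sumner statement you need in step (ii) is itself false for the ordered forest coming from $R_5$: the ordered graphs $(D_k^{\prec},\prec)$ have clique number $2$, contain no ordered induced copy of that ordered path-plus-isolated-vertex, yet have unbounded chromatic number (since $\chi(D_k^{\prec})\ge\dic(D_k)$). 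You were right to flag step (ii) as the crux and to doubt that every ordered forest is ordered-$\chi$-bounding; the resolution is that it is not, and the correct task was to construct a counterexample rather than to prove a bound. Your closing remark that the tournament conjecture might be strictly weaker than Gy\'arf\'as--Sumner is thereby settled in the strongest sense: the undirected conjecture remains open, while the tournament analogue is simply false.
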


We answer this question in the negative.


\section{Definitions and Notations}


Definitions and notations of this paper that are not explained in this section follow from classical textbooks such as \cite{BG18}, \cite{BM08} or \cite{D05}.

Given a set $X$ and an integer $n$, $X \choose n$ is the set of all subsets of $X$ of size exactly $n$.

Given two disjoint sets of vertices $X, Y$ of a digraph $D$, we write $X \Rightarrow Y$ to say that for every $x \in X$ and every $y \in Y$, $xy \in A(D)$, and we write $X \rightarrow Y$ to say that every arc with one end in $X$ and the other one in $Y$ is oriented from $X$ to $Y$ (but some vertices of $X$ might be non-adjacent to some vertices of $Y$). When $X=\{x\}$ we write $x \Rightarrow Y$ and $x  \rightarrow Y$. 

We also use the symbol $\Ra$ to denote a composition operation on digraphs: for two digraphs $D_1$ and $D_2$, $D_1\Ra D_2$ is the digraph obtained from the disjoint union of $D_1$ and $D_2$ by adding all arcs from $V(D_1)$ to $V(D_2)$.

\medskip 

A {\em transitive tournament} is an acyclic tournament and we denote by $TT_n$ the unique acyclic tournament on $n$ vertices.  

Given three tournaments $T_1,T_2,T_3$, we denote by $\Delta(T_1,T_2,T_3)$ the tournament obtained from  disjoint copies of $T_1,T_2,T_3$ by adding arcs in such a way that $T_1\Ra T_2$, $T_2\Ra T_3$ and $T_3\Ra T_1$. Suppose one or more of the tournaments $T_i$ is a transitive tournament $TT_k$. In that case, we simplify the notation by using its size $k$ instead of writing $TT_k$ in the $\Delta$ construction: for example, $\Delta(1,k, T)$ corresponds to $\Delta(TT_1, TT_k,T)$ and $\Delta(1,1,1)$ is simply the directed triangle, which we also denote by $C_3$.

\medskip

Given a graph or tournament with a total ordering $\prec$ of its vertex set $V$ and two disjoint subsets $A, B$ of $V$, we write $A \prec B$ to say that for every $a \in A$ and every $b \in B$, $a\prec b$. For a digraph or tournament with a total ordering $\prec$ of its vertex set $V$, an arc $uv$ such that $u\prec v$ is called {\em forward}; otherwise, it is called {\em backwards}. Recall that given a tournament $T$, and a total order $\prec$ on $V(T)$, the backedge graph $T^{\prec}$ of $T$ with respect to $\prec$ is the (undirected) graph with vertex set $V(T)$ and edges $uv$ if $u\prec v$ and $vu\in A(T)$ (i.e. $vu$ is backward).
An ordering $\prec$ such that $\omega(T^{\prec})= \diomega(T)$ is called an \emph{$\diomega$-ordering} of $T$.
For $\prec$ an ordering of a set $X$, we denote by $\prec_{\mid Y}$ the restriction of $\prec$ to a subset $Y \subseteq X$. For $T, T'$ two tournaments with $V(T) \subseteq V(T')$ and $\prec$ an ordering of $T'$, we sometimes use $T^\prec$ to denote $T^{\prec_{\mid V(T)}}$.

\section{Computing the clique number of tournaments is NP-complete}

In this section, we will be interested in the algorithmic aspects of computing the clique number of tournaments. Our main result is the following:

\begin{theorem}\label{thm:k_diomega_NP}
    Let $k \geq 3$. \textsc{$k$-DIOMEGA-ORDERING} is NP-complete.
\end{theorem}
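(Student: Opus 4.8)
The plan is to reduce from a known NP-complete problem — a natural candidate is a coloring or satisfiability problem — to \textsc{$k$-DIOMEGA-ORDERING}. I would first handle the case $k = 3$ in detail and then lift it to all $k \geq 3$ by a padding argument. Membership in NP is immediate: an ordering $\prec$ of $V(T)$ is a polynomial-size certificate, and computing $\omega(T^\prec)$ for a fixed bound $k$ is polynomial since it suffices to check all $(k+1)$-subsets. So the content is NP-hardness.

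For the hardness reduction, the natural strategy is to build, from an instance $I$ of the source problem, a tournament $T_I$ together with a family of "gadget" subtournaments so that any ordering $\prec$ of $V(T_I)$ achieving $\omega(T_I^\prec) \leq 3$ is forced to make certain local choices (e.g., which of two vertices comes first inside a gadget, or in which of three blocks a vertex is placed), and these forced choices encode a solution to $I$; conversely, a solution to $I$ yields an ordering with backedge clique number at most $3$. The $\Delta$ construction from the Definitions section is the key tool here: $\Delta(T_1,T_2,T_3)$ forces any ordering to "break" one of the three rotational parts, and by nesting $\Delta$'s with transitive tournaments $TT_k$ one can build gadgets whose only low-$\diomega$ orderings are highly constrained. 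I would use $\Delta(1,k,T)$-type pieces as rigid connectors and combine many copies so that satisfying all local constraints simultaneously is equivalent to the combinatorial structure of $I$ (variable gadgets, clause gadgets, and consistency arcs between them). The abstract mentions that this construction is later modified to refute a conjecture, which strongly suggests the core gadget is exactly such a $\Delta$-based rigidity gadget.

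The two directions of correctness then need to be verified. For soundness (solution of $I$ $\Rightarrow$ ordering with $\diomega \leq 3$), I would exhibit the ordering explicitly, partition $V(T_I)$ into the ordered blocks dictated by the gadgets, and bound the clique number of the backedge graph by analyzing backward arcs block-by-block, showing no $K_4$ appears. For completeness (ordering with $\diomega \leq 3$ $\Rightarrow$ solution of $I$), I would argue that inside each gadget the low clique number forces the intended local configuration — this is where one proves a "rigidity lemma" stating that any $\diomega$-ordering of the gadget, restricted to its vertices, looks like one of a small set of canonical orderings — and then that the arcs linking gadgets propagate these local choices into a globally consistent solution. Finally, to extend from $k=3$ to arbitrary $k \geq 3$, I would take $T_I \Ra TT_{k-3}$ or insert a $TT_{k-3}$ dominating everything (or a similar padding): this raises the clique number of every backedge graph by exactly $k-3$, so $\diomega(T_I') \leq k \iff \diomega(T_I) \leq 3$, and the reduction remains polynomial.

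The main obstacle I expect is the completeness direction — proving that the gadgets are genuinely rigid, i.e.\ that there is no "clever" ordering that mixes vertices across gadgets or orders a gadget unexpectedly yet still keeps $\omega(T^\prec) \leq 3$. Controlling the backedge clique number is a global condition, so one must rule out long-range interactions between gadgets that could relax the local constraints; making the connector arcs (the $\Ra$ relations between gadget parts) simultaneously flexible enough to admit the intended orderings and stiff enough to forbid all others is the delicate balance, and getting the gadget sizes and the $TT_k$ parameters right so that the forcing is airtight is where the real work lies.
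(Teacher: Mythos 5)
Your high-level plan (reduction from 3-SAT with variable and clause gadgets, NP membership via checking $(k+1)$-subsets, a rigidity lemma for the gadgets) matches the paper's strategy, but two essential pieces are missing or wrong. First, your lifting step from $k=3$ to general $k$ does not work as stated: appending a transitive tournament via $T_I \Ra TT_{k-3}$ (or having a $TT_{k-3}$ dominate everything) does \emph{not} raise the backedge clique number, since one can simply place the transitive part at the end (resp.\ beginning) of the ordering in its acyclic order, so that all arcs incident to it are forward and $\diomega(T_I \Ra TT_{k-3}) = \diomega(T_I)$. The paper's lift (Lemma~\ref{lem:uplift}) instead uses $\Delta(1, D, T_k)$, where $T_k$ is a specially constructed tournament (Lemma~\ref{lem:cut}) with the property that for every vertex subset $X$, either $T_k[X]$ or $T_k[\overline{X}]$ contains a copy of a tournament of clique number $k$; this ``unsplittable'' property is what makes the cyclic $\Delta$ structure genuinely increment the clique number and, crucially, preserves the set of optimal orderings of $D$ under restriction.

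Second, you correctly flag that the completeness direction -- proving the gadgets are rigid -- is ``where the real work lies,'' but you do not supply that work, and it is not routine. The paper's mechanism for forcing $u \prec v$ in any low-clique-number ordering is Lemma~\ref{lem:complete}, which again relies on the unsplittable tournament $T_k$ placed in $u^+ \cap v^-$: wherever the ordering cuts $T_k$ relative to $u$, a full-clique copy survives on the wrong side and creates a $K_{k+1}$ unless $u \prec v$. Without this tool, the $\Delta(1,k,T)$ pieces you propose as ``rigid connectors'' are not rigid. Moreover, the variable gadget (forcing exactly one of two disjoint arcs forward) and the clause gadget (forcing at least one of three disjoint arcs backward) are explicit 9- and 8-vertex tournaments of clique number $2$ found and verified by exhaustive computer search, then lifted to clique number $3$; nothing in your outline produces them. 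As written, the proposal is a reasonable template but does not constitute a proof, and its one concrete technical claim (the padding) is false.
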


which directly implies:

\begin{theorem}\label{thm:diomega_NP}
Computing the clique number of tournaments is NP-complete.
\end{theorem}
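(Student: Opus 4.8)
The plan is to prove Theorem~\ref{thm:k_diomega_NP} — that for every fixed $k \geq 3$, deciding whether $\diomega(T) \leq k$ is NP-complete — and then Theorem~\ref{thm:diomega_NP} follows immediately, since any polynomial algorithm computing $\diomega(T)$ would in particular decide \textsc{$3$-DIOMEGA-ORDERING} in polynomial time. For membership in NP, observe that an ordering $\prec$ of $V(T)$ is a polynomial-size certificate: given $\prec$, one builds the backedge graph $T^\prec$ and checks $\omega(T^\prec) \leq k$ by inspecting all $\binom{n}{k+1}$ candidate cliques, which is polynomial for fixed $k$. So the whole content is NP-hardness, which I would establish by a polynomial-time reduction from a known NP-complete problem.

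The natural source problem is a coloring-type or satisfiability-type problem, and I expect the reduction to go through a gadget construction: given an instance $I$ (say a graph $G$ for which we want to decide $3$-colorability, or a 3-SAT/NAE-SAT formula), build a tournament $T = T(I)$ such that $\diomega(T) \leq k$ if and only if $I$ is a yes-instance. The key design principle is that choosing an $\diomega$-ordering of $T$ should encode the combinatorial choice in $I$ (a proper coloring, or a truth assignment), while the clique-number constraint $\omega(T^\prec) \leq k$ should enforce exactly the consistency conditions of $I$. Since the abstract mentions that the NP-hardness construction is then reused (in modified form) to refute the Aboulker--Aubian--Charbit--Lopes conjecture, I expect the heart of the argument to be a single flexible tournament gadget, built from transitive-tournament blocks glued together by the $\Ra$ operation and the $\Delta(\cdot,\cdot,\cdot)$ construction introduced in Section~2. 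The directed triangles forced by the $\Delta$ pieces are what prevent degenerate orderings and force the encoded structure to appear as a clique in the backedge graph whenever a constraint is violated.

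Concretely, I would proceed as follows. First, fix $k = 3$ and design the base gadget; handling general $k \geq 3$ afterward should only require padding each gadget with a transitive tournament of size $k - 3$ placed "in parallel" (via $\Ra$ on both sides) so that the baseline clique number of every local piece is shifted up by exactly $k - 3$ without creating new interactions — this is the standard way such fixed-parameter hardness results are lifted. Second, prove the forward direction: from a yes-certificate of $I$, explicitly construct an ordering $\prec$ of $V(T)$ and verify $\omega(T^\prec) \leq k$ by a case analysis over which pairs of vertices contribute backward arcs. Third, prove the backward direction (the harder half): assume $\diomega(T) \leq k$ with witnessing ordering $\prec$, and show that $\prec$ must, up to the flexibility allowed, respect the gadget boundaries — i.e. decode $\prec$ into a valid certificate for $I$. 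Finally, confirm that $T$ has size polynomial in $|I|$ and is constructible in polynomial time.

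The main obstacle I anticipate is the backward direction: controlling all possible orderings $\prec$ of the whole tournament. Unlike in the reduction's forward direction, where we build a tailor-made ordering, here an adversary may interleave vertices from different gadgets arbitrarily, and one must argue that any such interleaving either already creates a $(k{+}1)$-clique in the backedge graph or can be "normalized" (without increasing $\omega(T^\prec)$) to a well-structured ordering that cleanly decodes to a solution of $I$. Making this rigid — showing the $\Delta$-gadgets force a near-canonical block order, and that within blocks the only freedom corresponds exactly to the combinatorial choices of $I$ — is where the technical weight of the proof lies, and I would expect it to be handled via a sequence of claims of the form "if $\prec$ does $X$, then $T^\prec$ contains $K_{k+1}$," each proved by exhibiting an explicit clique.
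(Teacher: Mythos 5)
Your derivation of Theorem~\ref{thm:diomega_NP} from the fixed-$k$ statement, and your NP-membership argument for fixed $k$, are exactly what the paper does, and your overall outline (reduce from 3-SAT, variable and clause gadgets glued by $\Ra$, forward direction by explicit ordering, backward direction by forcing structure on arbitrary orderings) matches the paper's strategy. But as written this is a plan rather than a proof: the two pieces that carry all the technical weight are missing or wrong. First, the gadgets themselves are not specified. In the paper they are concrete small tournaments (a $9$-vertex variable gadget and an $8$-vertex clause gadget, each of clique number $2$, with the required forward/backward behaviour of designated arcs verified by exhaustive search over all orderings); there is no generic construction to fall back on, so "design the base gadget" is a genuine gap.

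Second, and more seriously, the one concrete mechanism you do propose --- padding each gadget with a transitive tournament of size $k-3$ "in parallel" to shift the clique number up --- cannot work: a transitive tournament always admits an ordering with no backward arcs, so appending or prepending one (via $\Ra$ in either or both directions) never increases the clique number of any backedge graph. The paper's engine for both the lift and the normalization of adversarial orderings is the tournament $T_k$ of Lemma~\ref{lem:cut}: a tournament with $\diomega(T_k)=k$ such that for \emph{every} subset $X$ of its vertices, $X$ or its complement contains a full copy of a clique-number-$k$ tournament. This Ramsey-type property is what makes Lemma~\ref{lem:complete} work (if $W\subseteq u^+\cap v^-$ induces $T_k$ and the ordering has width at most $k$, then $u\prec v$, because wherever you cut $W$ around $u$ one side retains clique number $k$), which in turn rigidifies the block order in the backward direction; and it is what makes Lemma~\ref{lem:uplift} work ($\Delta(1,D,T_k)$ has clique number exactly one more than $D$, \emph{and} its optimal orderings restrict to exactly the optimal orderings of $D$ --- the second half being indispensable for the gadget properties to survive the lift from clique number $2$ to $3$ and from $3$ to general $k$). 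Without this construction, or something playing the same role, neither the backward direction nor the generalization to $k\geq 3$ goes through.
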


Note that while Theorem~\ref{thm:diomega_NP} is expected, as an analogue of the undirected case, Theorem~\ref{thm:k_diomega_NP} is surprising, in that it contrasts with the undirected case where the corresponding decision problem can be answered in polynomial time. To prove this, we will need intermediate results: this is the purpose of Subsection~\ref{subsec:tools}.

\subsection{Useful tools to increase the clique number}\label{subsec:tools}

Note that a variation of the construction in the following proof will be used in Definition~\ref{def:pi}.

\begin{lemma}\label{lem:cut}
Let $T$ be a tournament. There exists a tournament $T'$ with $\diomega(T') = \diomega(T)$ and such that for any $X \subseteq V(T')$, $T'[X]$ contains a copy of $T$ or $T'[\overline{X}]$ contains a copy of $T$.
\end{lemma}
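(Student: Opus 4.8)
The plan is to build $T'$ by taking many disjoint copies of $T$ and wiring them together with transitive-tournament structure so that no bipartition of $V(T')$ can avoid containing a full copy of $T$ on one side, while controlling the clique number with a careful choice of ordering.

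\paragraph{Construction.}
First I would set $N$ large (something like $N = |V(T)| + 1$ suffices, but I would keep the exact value flexible) and take $N$ disjoint copies $T_1, \dots, T_N$ of $T$. The key is how to add the arcs between the copies. The natural first attempt is the transitive cascade $T_1 \Ra T_2 \Ra \cdots \Ra T_N$ (using the $\Ra$ composition from the Definitions section). For any $X \subseteq V(T')$, consider whether some $T_i$ lies entirely in $X$ or entirely in $\overline X$; if not, every $T_i$ meets both $X$ and $\overline X$. That alone is not enough, so I expect the real construction needs a second layer: replace each vertex (or bundle) so that the copies are arranged not just in a path but in a way that forces one side to swallow a whole copy. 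A cleaner approach: take $T$ itself and, for each vertex $v \in V(T)$, blow $v$ up into a transitive tournament $TT_N$ (or into $N$ copies of something), then argue via pigeonhole. I would aim for the statement: in $T'$, for any $X$, either $X$ or $\overline X$ contains, for each original vertex slot, at least one "representative", and these representatives induce a copy of $T$ because the blow-up preserves adjacency between slots.

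\paragraph{Clique number is preserved.}
For the inequality $\diomega(T') \geq \diomega(T)$: any copy of $T$ sits inside $T'$ as a subtournament, and $\diomega$ is monotone under taking subtournaments (restricting an $\diomega$-ordering of $T'$ to $V(T)$ gives a valid ordering witnessing $\diomega(T^\prec) \le \diomega(T')$), so $\diomega(T') \geq \diomega(T)$. For $\diomega(T') \leq \diomega(T)$: I would exhibit an explicit ordering $\prec$ of $V(T')$. Take an $\diomega$-ordering $\prec_0$ of $T$; within each copy $T_i$ order the vertices by $\prec_0$; then concatenate the copies in the order consistent with the transitive wiring (so all inter-copy arcs are forward, contributing no backedges). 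Then $(T')^\prec$ is the disjoint union of the $N$ copies of $T^{\prec_0}$, whose clique number is exactly $\diomega(T)$. If the construction uses vertex blow-ups into transitive tournaments instead, the same idea applies: order each blown-up $TT_N$ so its internal arcs are all forward, and inherit $\prec_0$ on the slot level; a clique in the backedge graph then projects to a clique in $T^{\prec_0}$ (vertices in the same blown-up slot form a backedge-free transitive piece, hence contribute at most... — here I'd need the blow-up arranged so a clique uses at most one vertex per slot, or so that a whole blown-up slot is transitive and thus harmless).

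\paragraph{Main obstacle.}
The delicate point is designing the inter-copy (or blow-up) arcs so that \emph{simultaneously} (i) every bipartition $(X, \overline X)$ has a monochromatic copy of $T$ and (ii) there is an ordering making the backedge graph's clique number stay at $\diomega(T)$. These pull in opposite directions: forcing monochromatic copies wants dense, "unavoidable" structure, while keeping $\diomega$ low wants the extra arcs to be orderable as forward arcs. The transitive nature of $\Ra$ is what reconciles them — transitive tournaments have $\diomega = 1$ and can always be laid out with no backedges — so I expect the correct construction is essentially "substitute a copy of $T$ into each vertex of a large transitive tournament, or substitute a large transitive tournament into each vertex of $T$," and the bipartition argument is a pigeonhole: with $N > |V(T)|$ copies arranged transitively, if no copy is monochromatic then... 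I would need to think carefully, but the likely resolution is to take $N$ copies and note that the "cut" $X$ restricted to the transitive skeleton must put some large block consistently on one side. Verifying (i) rigorously, and pinning down exactly which variant of the construction makes both (i) and (ii) go through cleanly, is where the actual work lies; everything else is routine monotonicity and ordering bookkeeping.
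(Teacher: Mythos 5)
There is a genuine gap: you have correctly proved the easy half (that the cascade ordering keeps the clique number at $\diomega(T)$) and correctly diagnosed the tension, but the central construction that forces a copy of $T$ on one side of every cut is missing, and both concrete candidates you float provably fail. For the plain cascade $T_1 \Ra \cdots \Ra T_N$ with $T = C_3$, take $X$ to be a transversal with one vertex per copy: then $T'[X]$ is a transitive tournament and $T'[\overline X]$ is a forward-linked union of two-vertex (hence acyclic) pieces, so neither side contains a directed triangle. The blow-up of each vertex of $T$ into a $TT_N$ fails similarly (take $X$ to be one whole blown-up slot). The deeper issue is that your proposal looks for the forced copy of $T$ either inside a single $T_i$ or as a transversal inheriting adjacency from a blow-up skeleton; neither mechanism can coexist with $\diomega(T') = \diomega(T)$, which is exactly the obstruction you name but do not overcome.

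The paper's resolution is a labelled construction you would not reach by iterating on the cascade or blow-up. Writing $n = |V(T)|$ and fixing a $\diomega$-ordering $v_1,\dots,v_n$ of $T$, it takes $m = \binom{n(n-1)+1}{n}$ copies of $T$ for each index $i \in \intv{1}{n}$ (so $mn$ copies in total, arranged in $n$ consecutive blocks along a cascade), assigns to each copy an $n$-element subset of a palette of $n(n-1)+1$ colours via a bijection onto $\binom{\intv{1}{n(n-1)+1}}{n}$, and colours the vertices of each copy bijectively with its subset. Arcs between copies in different blocks are then reversed precisely between \emph{same-coloured} vertices, with the direction of reversal encoding the arc of $T$ between the two block-vertices $v_i, v_j$. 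If neither $X$ nor $\overline X$ contained a copy of $T$, then $X$ would meet every copy; within each block the set of colours used by these representatives misses at most $n-1$ colours (else some copy's entire $n$-subset is missed, contradicting that its representative carries one of its own colours), so over the $n$ blocks at most $n(n-1)$ colours are missed and some colour $c$ survives in all blocks; the $c$-coloured representatives, one per block, induce a copy of $T$ inside $X$ via the reversed arcs. The clique-number bound then needs the extra case analysis that a backedge clique either picks at most one vertex per block (projecting to a clique of $T^{\prec}$ for the chosen $\diomega$-ordering) or lies entirely inside one copy. None of this colour-coding and covering argument is recoverable from your sketch, so the proof as proposed does not go through.
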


\begin{proof}
    If $\diomega(T) = 1$, then $T$ is a transitive tournament, and thus $T' = T \Ra T$ has the desired property. Thus, we can suppose $\diomega(T) \geq 2$.

    Let $v_1, \dots, v_n$ be a $\diomega$-ordering of $V(T)$.
Let $m = {{n(n-1)+1} \choose n}$ and let $T_1, \dots, T_{m \times n}$ be $m \times n$ copies of $T$.
    Let $i \in \intv{1}{n}$. Let $B_i = \intv{1 + (i-1) \times m}{i \times m}$.
    Note that $(B_i)_{i \in \intv{1}{n}}$ is a partition of $\intv{1}{m \times n}$, thus we can associate to each $x \in \intv{1}{m \times n}$ an integer $B^{-1}(x)$ such that $x \in B_{B^{-1}(x)}$.
    Let $\varphi_i$ be a bijection from $B_i$ to ${\intv{1}{n\times(n-1) + 1}} \choose n$ and for $j \in B_i$, let $\psi_j$ be a bijection from $V(T_j)$ to $\varphi_i(j)$.
    Thus, we associate to each $j \in B_i$ a list of $n$ integers $\varphi_i(j)$, and to each $v \in V(T_j)$ an element in this list.
    $T'$ will be obtained from $T_1 \Ra \dots \Ra T_m$ by reversing arc $uw$ where $u \in T_i$ and $w \in T_j$ if and only if:
    \begin{itemize}
        \item $i < j$,
        \item $\psi_i(u) = \psi_j(w)$,
        \item $v_{B^{-1}(j)} v_{B^{-1}(i)} \in A(T)$.
    \end{itemize}

    Let us first prove that $\diomega(T') = \diomega(T)$. Let $\prec$ be the ordering of $V(T')$ such that if $u$ is the copy of vertex $v_i$ in $T_j$ and $w$ is the copy of vertex $v_{i'}$ in $T_{j'}$, $u \prec w$ if and only if $(j,i)$ is lexicographically smaller than $(j',i')$. Let $K \subseteq V(T')$ inducing a clique in $T'^{\prec}$.

    Suppose that for every $i \in \intv{1}{n}$, $K$ intersects $\bigcup_{j \in B_i} V(T_j)$ on at most one vertex $u_i$. Then $\{v_i \mid u_i \in K\}$ induces a clique in $T$, and thus $|K| \leq \diomega(T)$.
    Hence, $|K \cap \bigcup_{j \in B_i} V(T_j)| \geq 2$ for some $i \in \intv{1}{n}$. Let $X = K \cap \bigcup_{j \in B_i} V(T_j)$. Let $u, w$ be distinct vertices of $X$ with $u \in V(T_j)$ and $w \in V(T_k)$.

 We cannot have $k > j$, for since $uw \in A(T)$ and $u \prec w$, $K$ would not induce a clique in $T^\prec$. Hence $k = j$, and $\psi_j(u) \neq \psi_j(w)$
 If $K \neq X$, let $y \in K \setminus X$ and let $\ell$ be such that $y \in V(T_\ell)$, then since $K$ induces a clique in $T^\prec$, $\psi_\ell(y) = \psi_j(u)$ and $\psi_\ell(y) = \psi_j(w)$, which contradicts $\psi_j(u) = \psi_j(w)$. 
 Hence, $K = X$. But, since $\prec_{\mid V(T_j)}$ is a $\diomega$-ordering of $T_j$, $|K| \leq \diomega(T)$.
    Thus, $\diomega(T') \leq \diomega(T)$.

    Let us now prove by contradiction that for any $X \subseteq V(T')$, $T'[X]$ contains a copy of $T$ or $T'[\overline{X}]$ contains a copy of $T$.
    Suppose this is not the case.
    Then, for every $i \in \intv{1}{m \times n}$, there exists a vertex $u_i \in X \cap T_i$. For $j \in \intv{1}{n}$, let $C_j = \{\psi_i(u_i) \mid i \in B_j\}$.
    We have that $|C_j| \geq (n-1)^2 + 2$, since if there exists $C \subseteq \intv{1}{n(n-1) + 1} \setminus C_j$ of size $n$, let $i = \varphi^{-1}(C)$, then
    $\psi_i(u_i) \in C$, contradicting that $\psi_i(u_i) \in C_j$. Hence $|\bigcap_{j \in \intv{1}{n}} C_j| \geq 1$, which implies there exists $c \in \bigcap_{j \in \intv{1}{n}} C_j$.
    Thus, for every $j \in \intv{1}{n}$, there exists a vertex $w_j \in T_i \cap X$ with $i \in B_j$ such that $\psi_i(w_j) = c$.
    By definition of $T'$, $w_j w_j' \in A(T')$ if and only if $v_j v_j' \in A(T)$, and thus $T'[\{w_j \mid j \in \intv{1}{n}\}]$ is isomorphic to $T$, yet is included in $X$, a contradiction.

\end{proof}

Using this first lemma, we can then define the tournament $T_k$ as follows:

\begin{definition}\label{def:T_k}
    Let $k \in \mathbb{N}$. Let $T$ be any tournament with $\diomega(T) = k$. We denote as $T_k$ a tournament such that $\diomega(T_k) = k$ and for any $X \subseteq V(T_k)$, $T_k[X]$ or $T_k[\overline{X}]$ contains a copy of $T$.
\end{definition}

Note that such a $T_k$ necessarily exists due to Lemma~\ref{lem:cut}.
Using $T_k$, we can modify constructions to force the order of some pairs of vertices.

\medskip

\begin{lemma}\label{lem:complete}
    Let $k \in \mathbb{N}$. Let $D$ be a digraph, let $u,v \in V(D)$ and let $W \subseteq u^+ \cap v^-$ such that $D[W] = T_k$. If $\prec$ is an ordering of $V(D)$ with $\omega(D^\prec) \leq k$, then $u \prec v$.
\end{lemma}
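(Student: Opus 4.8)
The plan is to argue by contradiction: assuming $u \not\prec v$, hence $v \prec u$ (the conclusion presupposes $u\neq v$, and indeed $u=v$ is incompatible with $D[W]=T_k$ being a nonempty tournament, since it would force digons $uw,wu$ for every $w\in W$), I will build a clique of size $k+1$ in $D^\prec$, contradicting $\omega(D^\prec)\le k$. First I unpack the hypothesis: $W\subseteq u^+\cap v^-$ means exactly that $uw\in A(D)$ and $wv\in A(D)$ for every $w\in W$, i.e.\ $u\Rightarrow W\Rightarrow v$. I will also record the elementary fact, following from the definition of $\diomega$ as a minimum over orderings, that for any $S\subseteq V(D)$ with $D[S]\cong T$ one has $\omega(D^\prec[S]) = \omega\bigl((D[S])^{\prec_{\mid S}}\bigr) \ge \diomega(T)$.

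The key step is to feed the \emph{right} subset of $W$ into the defining property of $T_k$ (Definition~\ref{def:T_k}), namely the one cut out by the position of $v$. Since $v\notin W$, the set $W$ is the disjoint union of $X := \{\, w\in W : v\prec w \,\}$ and $\overline{X}\cap W = \{\, w\in W : w\prec v \,\}$. Because $D[W]=T_k$, either $D[X]$ or $D[\overline{X}\cap W]$ contains a copy of a tournament $T$ with $\diomega(T)=k$; let $S$ be the vertex set of such a copy, so $D[S]\cong T$, and fix a clique $C\subseteq S$ of $D^\prec$ with $|C|=k$, which exists by the fact recorded above.

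Now I split into the two cases according to where $S$ lies. If $S\subseteq X$, then for every $w\in S$ we have $v\prec w$ while $wv\in A(D)$, so $wv$ is a backward arc and $vw$ is an edge of $D^\prec$; hence $v$ is complete to $C$ in $D^\prec$ and $C\cup\{v\}$ is a clique of size $k+1$. If instead $S\subseteq \overline{X}\cap W$, then for every $w\in S$ we have $w\prec v\prec u$, hence $w\prec u$ while $uw\in A(D)$, so $uw$ is backward and $uw$ is an edge of $D^\prec$; hence $u$ is complete to $C$ and $C\cup\{u\}$ is a clique of size $k+1$. In either case $\omega(D^\prec)\ge k+1$, contradicting the hypothesis, so $u\prec v$.

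I do not expect a genuine obstacle here: the whole content is the choice of the partition $W = X \sqcup (\overline{X}\cap W)$ around $v$, engineered so that in the "upper" half $v$ receives only backward arcs and in the "lower" half $u$ sends only backward arcs, and the property of $T_k$ guarantees a full copy of $T$ survives in one of the two halves. The one point requiring a moment's care is that the second case genuinely uses the \emph{strict} inequality $v\prec u$ (to get $w\prec u$), which is precisely the negation of the statement being proved, so the argument is not circular.
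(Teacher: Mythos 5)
Your proof is correct and follows essentially the same approach as the paper: both arguments assume $v\prec u$, split $W$ into two halves using the cut property of $T_k$ from Definition~\ref{def:T_k}, extract a $k$-clique of the backedge graph from the copy of $T$ in one half, and extend it to a $(k+1)$-clique using $u\Rightarrow W\Rightarrow v$. The only (immaterial) difference is that you partition $W$ at the position of $v$ and treat the two resulting cases symmetrically, whereas the paper partitions at $u$ and first rules out a copy of $T$ in the lower half before attaching $v$ to the upper half.
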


\begin{proof}

  Suppose $v \prec u$. Let $W^\prec = \{w \in W \mid w \prec u\}$ and $W^\succ = \{w \in W \mid u \prec w\}$. Since $u \Ra W$, $\diomega(W^\prec) \leq k - 1$. Thus, there is a copy of $T$ in $W^\succ$ and thus $\diomega(D[W^\succ]) = k$. But since $W \Ra v$ and $v \prec u \prec W^\succ$, this implies that $\omega(D^\prec) > k$, a contradiction.
\end{proof}

We can also use $T_k$ to leverage $\diomega$-orderings of a tournament of a given clique number into $\diomega$-orderings of a tournament with a larger clique number.

\begin{lemma}\label{lem:uplift}
Let $k \in \mathbb{N}$. If $D$ is a digraph with $\diomega(D) = k-1$, then $\diomega(\Delta(1, D, T_k)) = k$ and $\{\prec \mid \omega(D^\prec) = k-1\} = \{\prec_{\mid V(D)} \mid \omega(\Delta(1, D, T_k)^\prec) = k\}$
\end{lemma}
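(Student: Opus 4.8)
The plan is to first determine $\diomega(\Delta(1,D,T_k))$ and then the set of orderings, in both cases working from the cyclic structure $\{x\}\Ra D\Ra T_k\Ra\{x\}$ of $\Delta(1,D,T_k)$, where $x$ denotes the vertex of the $TT_1$ factor (so $\Delta(1,D,T_k)[V(D)]=D$ and $\Delta(1,D,T_k)[V(T_k)]=T_k$). For the lower bound, any ordering of $\Delta(1,D,T_k)$ restricts to one of $T_k$ whose backedge graph appears inside the full backedge graph as an induced subgraph, so $\diomega(\Delta(1,D,T_k))\geq\diomega(T_k)=k$. For the matching upper bound I would fix a $\diomega$-ordering $\prec_D$ of $D$ (so $\omega(D^{\prec_D})=k-1$) and a $\diomega$-ordering $\prec_{T_k}$ of $T_k$ (so $\omega(T_k^{\prec_{T_k}})=k$), and take the ordering $\prec$ of $V(\Delta(1,D,T_k))$ given by ``$V(D)$ along $\prec_D$, then $V(T_k)$ along $\prec_{T_k}$, then $x$''. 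Inspecting the three families of cross-arcs shows the arcs of $D\Ra T_k$ and of $T_k\Ra x$ are all forward, while the arcs of $x\Ra D$ are all backward; hence $\Delta(1,D,T_k)^\prec$ is the disjoint union of $T_k^{\prec_{T_k}}$ with the graph obtained from $D^{\prec_D}$ by adding $x$ joined to all of $V(D)$. Its clique number is $\max(k,\,1+(k-1))=k$, so $\diomega(\Delta(1,D,T_k))\leq k$, and equality follows.

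Because $\diomega(D)=k-1$ and $\diomega(\Delta(1,D,T_k))=k$, the claimed identity says exactly that the $\diomega$-orderings of $D$ are precisely the restrictions to $V(D)$ of the $\diomega$-orderings of $\Delta(1,D,T_k)$. The inclusion $\subseteq$ is immediate from the construction above: from an arbitrary $\diomega$-ordering $\prec_D$ of $D$, the ordering $\prec$ built there is a $\diomega$-ordering of $\Delta(1,D,T_k)$ with $\prec_{\mid V(D)}=\prec_D$.

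For $\supseteq$, let $\prec$ be a $\diomega$-ordering of $\Delta(1,D,T_k)$, so that $\omega(\Delta(1,D,T_k)^\prec)=k$. I would first apply Lemma~\ref{lem:complete} to the digraph $\Delta(1,D,T_k)$ with $W=V(T_k)$ (which induces $T_k$), with $v=x$, and with $u$ ranging over $V(D)$: here $W\subseteq u^+$ since $D\Ra T_k$, and $W\subseteq x^-$ since $T_k\Ra x$, so the lemma gives $u\prec x$ for every $u\in V(D)$, that is, $V(D)\prec x$. Now suppose for contradiction that $\omega(D^{\prec_{\mid V(D)}})\geq k$ and let $K\subseteq V(D)$ be a clique of size $k$ in $D^{\prec_{\mid V(D)}}$; since $\Delta(1,D,T_k)[V(D)]=D$, the set $K$ is also a clique of $\Delta(1,D,T_k)^\prec$. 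As $x\Ra D$ and $V(D)\prec x$, every arc from $x$ to a vertex of $K$ is backward, so $x$ is adjacent in $\Delta(1,D,T_k)^\prec$ to every vertex of $K$; hence $K\cup\{x\}$ is a clique of size $k+1$, contradicting $\omega(\Delta(1,D,T_k)^\prec)=k$. Therefore $\omega(D^{\prec_{\mid V(D)}})\leq k-1$, and since $\diomega(D)=k-1$ forces $\omega(D^{\prec_{\mid V(D)}})\geq k-1$, we conclude $\omega(D^{\prec_{\mid V(D)}})=k-1$.

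I do not expect a genuine obstacle: Lemma~\ref{lem:complete} (together with Definition~\ref{def:T_k}, which supplies $T_k$) does the heavy lifting. The only delicate points are the forward/backward bookkeeping in the cyclic $\Delta$ gadget, in particular checking that in the concatenated ordering the $T_k$-part and the $D\cup\{x\}$-part each contribute a clique of size $k$ but do not combine into anything larger, and verifying the hypotheses $W\subseteq u^+\cap v^-$ before invoking Lemma~\ref{lem:complete}.
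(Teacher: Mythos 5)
Your proof is correct and follows essentially the same route as the paper: the lower bound comes from $T_k$ being an induced subdigraph, the upper bound and the inclusion $\subseteq$ come from the concatenated ordering ($V(D)$, then $V(T_k)$, then the $TT_1$-vertex), and the inclusion $\supseteq$ comes from Lemma~\ref{lem:complete} applied with $W=V(T_k)$ to force $V(D)\prec x$, after which the backward arcs from $x$ to $V(D)$ cap $\omega(D^{\prec_{\mid V(D)}})$ at $k-1$. If anything, your final step is spelled out more carefully than the paper's, whose text there appears to contain a typo (it asserts $T_k\prec' v$ and computes $\omega(T_k^{\prec'})$ where $V(D)\prec' v$ and $\omega(D^{\prec'})$ are clearly intended).
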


\begin{proof}

    We will prove that $D' = \Delta(1, D, T_k)$ has the desired property. Clearly $\diomega(D') \geq k$ since $T_k$ is a subdigraph of $D'$. Let $\{v\} = V(D') \setminus V(D) \setminus V(T_k)$.
   
   Let $\prec_k$ be a $\diomega$-ordering of $T_k$. For any $\diomega$-ordering $\prec$ of $V(D)$, let $\prec'$ be the ordering of $V(D')$ obtained by concatenating the vertices of $D$ ordered according to $\prec$, followed by the vertices of $T_k$ ordered according to $\prec_k$ followed by vertex $v$. Then $\prec'$ satisfies $\omega(D'^{\prec'}) = k$, and $\prec'_{\mid V(D)} = \prec$. Thus $\diomega(D') \leq k$ and $\{\prec \mid \omega(D^\prec) = k-1\} \subseteq \{\prec_{\mid V(D)} \mid \omega(\Delta(1, D, T_k)^\prec) = k\}$.

    Let $\prec'$ be an ordering of $V(D')$ with $\omega(D'^{\prec'}) = k$. Let $u \in V(D)$: by Lemma~\ref{lem:complete}, $u \prec' v$.
    Thus, $T_k \prec' v$, which implies that $\omega(T^{\prec'}_k) = k - 1$. Thus $\{\prec_{\mid V(D)} \mid \omega(\Delta(1, D, T_k)^\prec) = k\} \subseteq \{\prec \mid \omega(D^\prec) = k-1\}$.
\end{proof}

\subsection{Gadgets and NP-completeness}

Using the tournaments $T_k$ constructed in Definition~\ref{def:T_k}, we can now build gadgets that will be used to prove that \textsc{$k$-DIOMEGA-ORDERING} is NP-complete. We will do so using a reduction from $3$-SAT.

\medskip

In order to do so, we need to build a tournament in which one fixed edge is forward if and only if another fixed edge is not. This tournament will encode a variable, and which edge is forward will encode whether this variable is set to true.

\begin{lemma}\label{lem:T_var}
    There exists a tournament $T_{var}$ and $uv, wx$ disjoint arcs of $T_{var}$ such that:
\begin{itemize}
    \item $\diomega(T_{var}) = 3$
    \item in any $\diomega$-ordering, exactly one of $uv$ or $wx$ is forward.
    \item there exists a $\diomega$-ordering in which $uv$ is forward
    \item there exists a $\diomega$-ordering in which $uv$ is backward
\end{itemize}
\end{lemma}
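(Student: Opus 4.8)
The plan is to construct $T_{var}$ explicitly as a small ``core'' on the four vertices $u,v,w,x$, in which $uv$ and $wx$ are the arcs $u\to v$ and $w\to x$, together with several copies of $T_3$ glued on. Recall from Lemma~\ref{lem:complete} that inserting a copy of $T_3$ as $a\Ra W\Ra b$ (with $T_{var}[W]\cong T_3$) forces $a\prec b$ in every ordering $\prec$ with $\omega(T_{var}^\prec)\le 3$. I will use two such copies to force $u\prec x$ and $w\prec v$ in every $\diomega$-ordering. Together with the fixed arcs $u\to v$ and $w\to x$, this pins the order of $u,v,w,x$ in any $\diomega$-ordering to one of the six linear orders of $\{u,v,w,x\}$ satisfying $u\prec x$ and $w\prec v$. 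Exactly two of these six, namely $w\prec v\prec u\prec x$ (in which $wx$ is forward and $uv$ is backward) and $u\prec x\prec w\prec v$ (in which $uv$ is forward and $wx$ is backward), have exactly one of $uv,wx$ forward; the remaining four all have $u\prec v$ and $w\prec x$. Note that no order can have $v\prec u$ and $x\prec w$ together with $u\prec x$ and $w\prec v$, since $x\prec w\prec v\prec u\prec x$ would be a cycle; so ``both backward'' never occurs, and it suffices to rule out the four ``both forward'' orders, i.e.\ to enforce the implication $u\prec v \Rightarrow x\prec w$ in every $\diomega$-ordering.

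Enforcing this implication without destroying either of the two good orders is the heart of the proof. One cannot simply insert a copy of $T_3$ as $v\Ra W\Ra u$: by Lemma~\ref{lem:complete} that would force $v\prec u$ unconditionally, killing the good order $u\prec x\prec w\prec v$. Instead I will attach a further copy of $T_3$ (together with a bounded number of auxiliary vertices whose positions are themselves pinned down by additional $T_3$-gadgets) arranged so that: in each of the four bad orders --- those in which $u$ and $w$ both precede $v$ and $x$ --- the attached copy of $T_3$ cannot be partitioned cleanly and is forced to contain a triangle of the backedge graph which, together with one of $v$ or $x$, forms a $K_4$; while in each of the two good orders the attached copy can be placed so that the backedge graph still has clique number $3$. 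The verification that this works is where the bulk of the work lies: exactly as in the proofs of Lemmas~\ref{lem:cut} and~\ref{lem:complete}, one bounds $\diomega$ of the portion of the attached $T_3$-copy lying on each side of the relevant thresholds and applies the non-separability property of $T_3$ from Definition~\ref{def:T_k}, ranging over all placements of the gadget's vertices. I expect this --- making the gadget conditional (so that it does not over-constrain the order) yet robust against every ordering of its own vertices --- to be the main obstacle.

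Finally, it is immediate that $\diomega(T_{var})\ge 3$, since $T_3$ is a subtournament of $T_{var}$. For the reverse inequality and for the last two bullet points I will exhibit two explicit $\diomega$-orderings. Starting respectively from the core orders $w\prec v\prec u\prec x$ and $u\prec x\prec w\prec v$, one inserts the vertices of each attached copy of $T_3$, taken in a $\diomega$-ordering of that copy, at the place dictated by the corresponding $\Ra$-relations, and checks directly that the resulting backedge graph has clique number exactly $3$. Since $uv$ is forward in the second ordering and backward in the first, this establishes the third and fourth bullets as well as $\diomega(T_{var})\le 3$, and the second bullet is precisely the property secured by the gadget construction. (An alternative packaging of the same argument is to first build a digraph $D$ with $\diomega(D)=2$ carrying the two arcs together with the ``exactly one forward'' property, using $T_2$-gadgets for the forcing, and then set $T_{var}=\Delta(1,D,T_3)$ and invoke Lemma~\ref{lem:uplift}.)
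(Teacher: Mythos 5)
There is a genuine gap, and it sits exactly where you flag ``the main obstacle'': the conditional gadget that rules out the four ``both forward'' orders is never actually constructed. Your reduction of the problem is sound --- forcing $u\prec x$ and $w\prec v$ via Lemma~\ref{lem:complete} is legitimate, the count of six residual orders is correct, and you rightly observe that ``both backward'' is impossible so only the implication $u\prec v\Rightarrow x\prec w$ remains to be enforced. But the only forcing tool available (Lemma~\ref{lem:complete}) is \emph{unconditional}: it pins down a single order relation in every ordering of clique number at most $k$, and no combination of such unconditional constraints can encode the disjunction ``$v\prec u$ or $x\prec w$'' without also excluding one of the two good orders. You acknowledge this and promise a $T_3$-copy ``arranged so that'' bad orders force a $K_4$ while good orders do not, but no arrangement is given, and no lemma in the paper supplies one; asserting that the verification proceeds ``exactly as in the proofs of Lemmas~\ref{lem:cut} and~\ref{lem:complete}'' does not substitute for exhibiting the gadget. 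As written, the central claim of the lemma (the second bullet) rests on an unconstructed object.

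For comparison, the paper does not attempt any such conditional forcing machinery. It exhibits an explicit $9$-vertex tournament $T$ with $\diomega(T)=2$, designates the two arcs, displays two $\diomega$-orderings witnessing the third and fourth bullets, and verifies the ``exactly one forward'' property by exhaustive computer search over all $9!$ orderings; it then sets $T_{var}=\Delta(1,T,T_3)$ and transfers everything to clique number $3$ via Lemma~\ref{lem:uplift}. Your closing parenthetical describes precisely this packaging, but again without the explicit tournament it remains a strategy rather than a proof. To repair your argument you would need either to produce and verify a concrete conditional gadget, or to follow the paper and supply an explicit small tournament together with an exhaustive check.
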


\medskip

This proof relies mainly on computer help. However, since checking all orderings of a graph is a complex task, we proceed by first finding such a tournament with $\diomega = 2$, and then leveraging it using Lemma~\ref{lem:uplift}.

\medskip

\begin{proof}

    Let $T$ be the tournament with the following ordered graph as a backedge graph:

\begin{figure}[H]
    \centering
     \begin{tikzpicture}
         \vertex (8) at (1,0) {$1$};
         \vertex (2) at (2,0) {$2$};
         \vertex (1) at (3,0) {$3$};
         \vertex (5) at (4,0) {$4$};
         \vertex (3) at (5,0) {$5$};
         \vertex (4) at (6,0) {$6$};
         \vertex (0) at (7,0) {$7$};
         \vertex (7) at (8,0) {$8$};
         \vertex (6) at (9,0) {$9$};

         \draw[bend right=30] (8) to (0);
         \draw[bend right=30] (8) to (7);
         \draw[bend right=30] (8) to (6);
         \draw[bend right=30] (2) to (3);
         \draw[bend right=30] (2) to (4);
         \draw[bend left=30] (2) to (7);
         \draw[bend left=30] (1) to (7);
         \draw[bend left=30] (1) to (6);
         \draw[bend left=30] (5) to (4);
         \draw[bend left=30] (3) to (6);
         \draw[bend left=30] (4) to (6);
     \end{tikzpicture}

     \caption{The backedge graph corresponding to the ordering $1 \prec 2 \prec 3 \prec 4 \prec 5 \prec 6 \prec 7 \prec 8 \prec 9$}
     \label{fig:not_first}
    \end{figure}
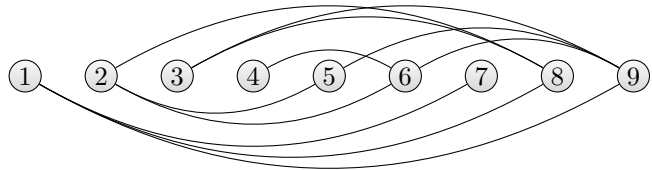
    
    and let $uv = 7 \rightarrow 9$ and $wx = 8 \rightarrow 3$.

\medskip

    Note that $T$ is not transitive and the ordering $1 \prec 2 \prec 3 \prec 4 \prec 5 \prec 6 \prec 7 \prec 8 \prec 9$ is such that $\omega(T^\prec) = 2$, thus $\diomega(T) = 2$.  

    The ordering $1 \prec 2 \prec 3 \prec 4 \prec 5 \prec 6 \prec 7 \prec 8 \prec 9$ is a $\diomega$-ordering in which $uv$ is forward. See Figure~\ref{fig:not_first}.

    The ordering $6 \prec 8 \prec 2 \prec 9 \prec 1 \prec 3 \prec 4 \prec 5 \prec 7$ is a $\diomega$-ordering in which $uv$ is backward. See Figure~\ref{fig:not_second}.

    \begin{figure}[H]
    
    \centering
     \begin{tikzpicture}
    
         \vertex (4) at (1,0) {$6$};
         \vertex (7) at (2,0) {$8$};
         \vertex (2) at (3,0) {$2$};
         \vertex (6) at (4,0) {$9$};
         \vertex (8) at (5,0) {$1$};
         \vertex (1) at (6,0) {$3$};
         \vertex (5) at (7,0) {$4$};
         \vertex (3) at (8,0) {$5$};
         \vertex (0) at (9,0) {$7$};

         \draw[bend right=30] (4) to (6);
         \draw[bend right=30] (4) to (8);
         \draw[bend right=30] (4) to (1);
         \draw[bend right=30] (4) to (3);
         \draw[bend right=30] (7) to (5);
         \draw[bend left=30] (7) to (3);
         \draw[bend left=30] (7) to (0);
         \draw[bend left=30] (2) to (8);
         \draw[bend left=30] (2) to (3);
         \draw[bend left=30] (6) to (5);
         \draw[bend left=30] (6) to (0);
         \draw[bend left=30] (8) to (0);
    \end{tikzpicture}
        \caption{The backedge graph of $T$ corresponding to the ordering $6 \prec 8 \prec 2 \prec 9 \prec 1 \prec 3 \prec 4 \prec 5 \prec 7$}
        \label{fig:not_second}
    \end{figure}
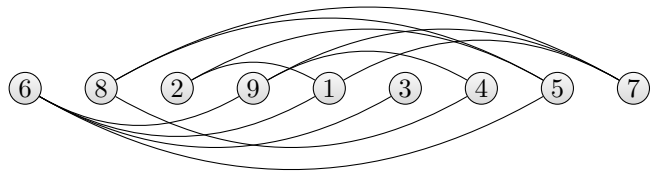
    
It can be checked that in every $\diomega$-ordering, exactly one of $uv$ or $wx$ is forward. See for example the Python code in Figure~\ref{fig:not_code}.

\begin{figure}[H]
    \centering
\begin{python}
import itertools

T = [
  [0,1,1,1,1,1,0,0,0],
  [0,0,1,1,0,0,1,0,1],
  [0,0,0,1,1,1,1,0,0],
  [0,0,0,0,1,0,1,1,1],
  [0,1,0,0,0,1,1,1,0],
  [0,1,0,1,0,0,1,1,0],
  [1,0,0,0,0,0,0,1,1],
  [1,1,1,0,0,0,0,0,1],
  [1,0,1,0,1,1,0,0,0]
]

for P in itertools.permutations(range(9)):
  omega_at_most_two = True
  for i in range(9):
    for j in range(i + 1,9):
      for k in range(j + 1, 9):
        if T[P[j]][P[i]] and T[P[k]][P[j]] and T[P[k]][P[i]]:
          omega_at_most_two = False
  if omega_at_most_two:
    assert((P.index(6) < P.index(8)) != (P.index(7) < P.index(2)))
\end{python}
     \caption{Python code to check that in every $\diomega$-ordering, exactly one of $uv$ or $wx$ is forward}\label{fig:not_code}
    \end{figure}

    By Lemma~\ref{lem:uplift}, there exists a tournament $T_3$ such that $T_{var} = \Delta(1, T, T_3)$ satisfies the desired property.

\end{proof}

We also need a gadget tournament to encode clauses, In order to do so, we build a tournament in which three fixed arcs are such that at least one of them is always backward. This will encode a disjunction.

\medskip

\begin{lemma}\label{lem:T_clause}
    There exists a tournament $T_{clause}$ and $uv, wx, yz$ pairwise disjoint arcs of $T_{clause}$ such that:
\begin{itemize}
    \item $\diomega(T_{clause}) = 3$
    \item for every $\diomega$-ordering $\prec$ of $T_{clause}$, one of $uv, wx, yz$ is backward
    \item for every two arcs in $uv, wx, yz$, there exists a $\diomega$-ordering $\prec$ in which these two arcs are forward

\end{itemize}
\end{lemma}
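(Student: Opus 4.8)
The plan is to mimic the proof of Lemma~\ref{lem:T_var}: first exhibit, with computer assistance, a small tournament $T'$ with $\diomega(T')=2$ together with three pairwise disjoint arcs $uv,wx,yz$ such that (a) in every $\diomega$-ordering of $T'$ at least one of the three arcs is backward, and (b) for each of the three pairs of these arcs there is a $\diomega$-ordering making both arcs of the pair forward; then set $T_{clause}=\Delta(1,T',T_3)$ and invoke Lemma~\ref{lem:uplift} to lift everything one level in the clique number.

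First I would search by computer for a suitable ordered backedge graph on a handful of vertices. A natural candidate encodes the ``disjunction'' directly: arrange things so that the three arcs $uv$, $wx$, $yz$ each being forward would, together, create a $TT_3$ (or otherwise a clique of size $3$) in the backedge graph of any ordering, while any two of them being forward leaves enough freedom to reorder the remaining vertices and stay within $\omega\le 2$. Concretely one wants a gadget where the ``forward'' state of each arc forces part of a potential triangle, and all three forced pieces only assemble into a triangle when all three arcs are forward. As in Lemma~\ref{lem:T_var}, I would then verify by brute force over all vertex permutations (a) that $\diomega(T')=2$, i.e. some ordering achieves $\omega=2$ and the tournament is non-transitive, (b) that every ordering with $\omega\le 2$ has at least one of $uv,wx,yz$ backward, and (c) exhibit three explicit $\diomega$-orderings, one for each pair of arcs, in which that pair is forward. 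This is exactly the role played by Figures~\ref{fig:not_first}, \ref{fig:not_second} and the Python snippet in Figure~\ref{fig:not_code}; I would include an analogous adjacency matrix, two or three figures of witnessing orderings, and a short \texttt{itertools.permutations} check asserting the disjunction property.

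Once $T'$ is in hand, the lifting is routine. Since $\diomega(T')=2=k-1$ for $k=3$, Lemma~\ref{lem:uplift} applied to $D=T'$ gives $\diomega(\Delta(1,T',T_3))=3$ and, crucially, a bijection (via restriction) between the $\diomega$-orderings of $T_{clause}=\Delta(1,T',T_3)$ and the orderings $\prec$ of $V(T')$ with $\omega(T'^{\prec})=2$. Under this correspondence the arcs $uv,wx,yz$ of $T'$ survive verbatim inside $T_{clause}$, and whether each is forward or backward is determined entirely by the restricted ordering. Hence property (b) of $T'$ transfers to give ``one of $uv,wx,yz$ is backward in every $\diomega$-ordering of $T_{clause}$'', and property (c) transfers to give, for each pair, a $\diomega$-ordering of $T_{clause}$ in which both arcs of the pair are forward (take the witnessing ordering of $T'$, then append the $\diomega$-ordering of $T_3$, then the apex vertex, exactly as in the proof of Lemma~\ref{lem:uplift}). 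That yields all three bullet points of the statement.

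The main obstacle is purely the first step: finding a base tournament $T'$ with $\diomega=2$ that actually has the required combinatorial behaviour of the three arcs, and one small enough that the brute-force check over all $n!$ orderings is feasible. There is no difficulty of principle here — the existence of at least one $\diomega$-ordering per pair is a finite check, and the ``always one backward'' property is a finite check — but one needs to be a little clever about the gadget design (probably on the order of $8$ to $12$ vertices, as in the variable gadget) so that $n!$ stays within reach and so that the non-transitivity and the three pair-witnesses can be displayed cleanly. Everything after that is a mechanical application of Lemma~\ref{lem:uplift}, and requires no new ideas.
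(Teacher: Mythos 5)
Your strategy is exactly the one the paper uses: build a base tournament $T'$ with $\diomega(T')=2$ carrying three pairwise disjoint arcs with the ``at least one backward / any two can be forward'' behaviour, certify it by brute force over all permutations, and then lift it to clique number $3$ via $T_{clause}=\Delta(1,T',T_3)$ and Lemma~\ref{lem:uplift}. Your description of the lifting step is accurate and matches the paper's argument (the restriction correspondence of Lemma~\ref{lem:uplift} transfers both the disjunction property and the pair-witness orderings verbatim, since the arcs of $T'$ and their forward/backward status depend only on the restricted ordering).

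The gap is that you never actually produce the gadget. The lemma is a pure existence statement, and essentially all of its content lives in the base tournament; the lifting is, as you say yourself, routine. You correctly identify finding $T'$ as ``the main obstacle'' but then leave it as a to-do, so as written the proposal does not establish the lemma. For the record, the paper resolves this with an explicit $8$-vertex tournament (arcs $uv=5\rightarrow 6$, $wx=2\rightarrow 4$, $yz=8\rightarrow 3$), three explicit orderings witnessing each pair being simultaneously forward, and an \texttt{itertools.permutations} check of the disjunction property over all $8!$ orderings --- precisely the artifacts your plan calls for but does not supply. Until a concrete $T'$ passing those checks is exhibited, the proof is incomplete.
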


\medskip

Similarly to Lemma~\ref{lem:T_var}, this proof relies mainly on computer help and we proceed by first finding such a tournament with clique number $2$, then leveraging it thanks to Lemma~\ref{lem:uplift}.

\medskip

\begin{proof}

    Let $T$ be the tournament with the following ordered graph as a backedge graph:

    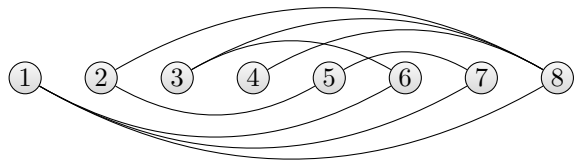
\begin{figure}[H]
    \centering
     \begin{tikzpicture}
    
         \vertex (5) at (1,0) {$1$};
         \vertex (4) at (2,0) {$2$};
         \vertex (0) at (3,0) {$3$};
         \vertex (1) at (4,0) {$4$};
         \vertex (3) at (5,0) {$5$};
         \vertex (7) at (6,0) {$6$};
         \vertex (2) at (7,0) {$7$};
         \vertex (6) at (8,0) {$8$};

         \draw[bend right=30] (5) to (7);
         \draw[bend right=30] (5) to (2);
         \draw[bend right=30] (5) to (6);
         \draw[bend right=30] (4) to (3);
         \draw[bend left=30] (4) to (6);
         \draw[bend left=30] (0) to (7);
         \draw[bend left=30] (0) to (6);
         \draw[bend left=30] (1) to (6);
         \draw[bend left=30] (3) to (2);
     \end{tikzpicture}
     \caption{The backedge graph corresponding to the ordering $1 \prec 2 \prec 3 \prec 4 \prec 5 \prec 6 \prec 7 \prec 8$}
        \label{fig:or_first}
    \end{figure}

    and let $uv = 5 \rightarrow 6$, $wx = 2 \rightarrow 4$ and $yz = 8 \rightarrow 3$.

\medskip

    Note that $T$ is not transitive and the ordering $1 \prec 2 \prec 2 \prec 3 \prec 4 \prec 5 \prec 6 \prec 7 \prec 8$ is such that $\omega(T^\prec) = 2$, thus $\diomega(T) = 2$.

    The ordering $1 \prec 2 \prec 3 \prec 4 \prec 5 \prec 6 \prec 7 \prec 8$ is a $\diomega$-ordering in which $uv$ and $wx$ are forward. See Figure~\ref{fig:or_first}.

    The ordering $4 \prec 7 \prec 5 \prec 8 \prec 1 \prec 2 \prec 6 \prec 3$ is a $\diomega$-ordering in which $uv$ and $yz$ are forward. See Figure~\ref{fig:or_second}.

    \begin{figure}[H]
    \centering
     \begin{tikzpicture}
    
         \vertex (5) at (5,0) {$1$};
         \vertex (4) at (6,0) {$2$};
         \vertex (0) at (8,0) {$3$};
         \vertex (1) at (1,0) {$4$};
         \vertex (3) at (3,0) {$5$};
         \vertex (7) at (7,0) {$6$};
         \vertex (2) at (2,0) {$7$};
         \vertex (6) at (4,0) {$8$};

         \draw[bend left=30] (1) to (6);
         \draw[bend left=30] (1) to (5);
         \draw[bend right=30] (4) to (1);
         \draw[bend right=30] (1) to (0);
         \draw[bend left=30] (2) to (4);
         \draw[bend left=30] (7) to (2);
         \draw[bend left=30] (0) to (2);
         \draw[bend right=30] (5) to (3);
         \draw[bend right=30] (3) to (0);
         \draw[bend left=30] (7) to (6);
         \draw[bend left=30] (7) to (5);
     \end{tikzpicture}
     \caption{The backedge graph of $T$ corresponding to the ordering $4 \prec 7 \prec 5 \prec 8 \prec 1 \prec 2 \prec 6 \prec 3$}
        \label{fig:or_second}
    \end{figure}
    
    The ordering $1 \prec 2 \prec 4 \prec 6 \prec 7 \prec 5 \prec 8 \prec 3$  is a $\diomega$-ordering in which $wx$ and $yz$ are forward. See Figure~\ref{fig:or_third}.

    \begin{figure}[H]
    \centering
     \begin{tikzpicture}
    
         \vertex (5) at (1,0) {$1$};
         \vertex (4) at (2,0) {$2$};
         \vertex (0) at (8,0) {$3$};
         \vertex (1) at (3,0) {$4$};
         \vertex (3) at (6,0) {$5$};
         \vertex (7) at (4,0) {$6$};
         \vertex (2) at (5,0) {$7$};
         \vertex (6) at (7,0) {$8$};

         \draw[bend right=30] (5) to (7);
         \draw[bend right=30] (5) to (2);
         \draw[bend right=30] (5) to (6);
         \draw[bend right=30] (4) to (3);
         \draw[bend left=30] (4) to (6);
         \draw[bend left=30] (7) to (1);
         \draw[bend left=30] (7) to (3);
         \draw[bend left=30] (1) to (6);
         \draw[bend left=30] (1) to (0);
         \draw[bend left=30] (2) to (0);
         \draw[bend left=30] (3) to (0);
     \end{tikzpicture}
     \caption{The backedge graph of $T$ corresponding to the ordering $1 \prec 2 \prec 4 \prec 6 \prec 7 \prec 5 \prec 8 \prec 3$}
        \label{fig:or_third}

    \end{figure}
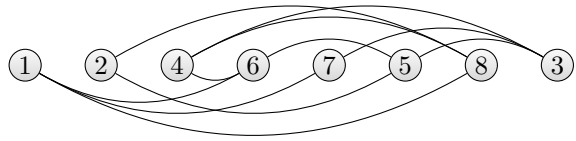

    It can be checked that in every $\diomega$-ordering, at least one of $uv$, $wx$ or $yz$ is backward.
See the Python code in Figure~\ref{fig:or_code}.

\begin{figure}[H]
    \centering
    \begin{python}
import itertools

T = [
  [0,1,1,1,1,0,0,0],
  [0,0,1,1,0,1,1,0],
  [0,0,0,1,1,0,1,0],
  [0,0,0,0,1,1,1,0],
  [0,1,0,0,0,1,0,1],
  [1,0,1,0,0,0,1,1],
  [1,0,0,0,1,0,0,1],
  [1,1,1,1,0,0,0,0],
]

for P in itertools.permutations(range(8)):
  omega_at_most_two = True
  for i in range(8):
    for j in range(i + 1,8):
      for k in range(j + 1, 8):
        if T[P[j]][P[i]] and T[P[k]][P[j]] and T[P[k]][P[i]]:
          omega_at_most_two = False
  if omega_at_most_two:
    assert(P.index(5) < P.index(4) or P.index(3) < P.index(1) or P.index(2) < P.index(7))
\end{python}
\caption{Python code to check that in every $\diomega$-ordering, one of $uv, wx$ or $yz$ is backward}
    \label{fig:or_code}
\end{figure}

    By Lemma~\ref{lem:uplift}, there exists a tournament $T_3$ such that $T_{clause} = \Delta(1, T, T_3)$ satisfies the desired property.

\end{proof}

We can then conclude:

\medskip

\begin{theorem}
\textsc{$3$-DIOMEGA-ORDERING} is NP-complete.
\end{theorem}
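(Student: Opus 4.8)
The plan is to reduce from $3$-SAT. Given a $3$-CNF formula $\phi$ with variables $x_1,\dots,x_n$ and clauses $C_1,\dots,C_m$, I would construct, in polynomial time, a tournament $T_\phi$ with $\diomega(T_\phi)\le 3$ if and only if $\phi$ is satisfiable. For each variable $x_i$ take a fresh copy $T^i_{var}$ of the tournament $T_{var}$ of Lemma~\ref{lem:T_var}, with distinguished arcs $u_iv_i$ and $w_ix_i$, and declare that ``$u_iv_i$ forward'' encodes ``$x_i$ true'' (so, by the lemma, ``$w_ix_i$ forward'' encodes ``$x_i$ false''). For each clause $C_j=\ell_{j,1}\vee\ell_{j,2}\vee\ell_{j,3}$ take a fresh copy $T^j_{clause}$ of the tournament $T_{clause}$ of Lemma~\ref{lem:T_clause}, with distinguished arcs $a_{j,t}b_{j,t}$ for $t\in\{1,2,3\}$. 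The goal of the wiring is that ``$a_{j,t}b_{j,t}$ backward'' forces ``$\ell_{j,t}$ satisfied''.

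The wiring uses Lemma~\ref{lem:complete}. For a positive occurrence of $x_i$ in position $t$ of $C_j$, add two fresh copies $W,W'$ of $T_3$ with $a_{j,t}\Rightarrow W\Rightarrow w_i$ and $x_i\Rightarrow W'\Rightarrow b_{j,t}$; for a negative occurrence, add $W,W'$ with $a_{j,t}\Rightarrow W\Rightarrow u_i$ and $v_i\Rightarrow W'\Rightarrow b_{j,t}$. Then in any ordering $\prec$ of $T_\phi$ with $\omega(T_\phi^{\prec})\le 3$, Lemma~\ref{lem:complete} forces $a_{j,t}\prec w_i$ and $x_i\prec b_{j,t}$ (positive case): if moreover $w_ix_i$ is forward (i.e. $w_i\prec x_i$, i.e. $x_i$ false) then $a_{j,t}\prec w_i\prec x_i\prec b_{j,t}$, so $a_{j,t}b_{j,t}$ is forward; contrapositively, ``$a_{j,t}b_{j,t}$ backward'' forces ``$w_ix_i$ backward'', hence (Lemma~\ref{lem:T_var}) ``$u_iv_i$ forward'', i.e. $x_i$ true, i.e. $\ell_{j,t}=x_i$ satisfied. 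The negative case is symmetric through $u_iv_i$. It remains to orient all the remaining (cross-component) arcs of $T_\phi$; I would fix a global linear reference order on $V(T_\phi)$ compatible with all the betweenness requirements (each forcing copy $W$ lies between its two endpoints) and with a chosen $\diomega$-ordering of every gadget, and orient each not-yet-oriented arc forward along this reference order. A useful tool here is that a single copy of $T_3$ placed so that $X\Rightarrow W\Rightarrow Y$ forces $X\prec Y$ (apply Lemma~\ref{lem:complete} to every pair $x\in X,\ y\in Y$), so one can cheaply rigidify large parts of the order.

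For correctness: if $\diomega(T_\phi)\le 3$, fix $\prec$ with $\omega(T_\phi^{\prec})\le 3$. Its restriction to each $T^i_{var}$ has clique number $\le 3=\diomega(T_{var})$, hence is a $\diomega$-ordering, so exactly one of $u_iv_i,w_ix_i$ is forward; this defines a truth assignment. Its restriction to each $T^j_{clause}$ is likewise a $\diomega$-ordering, so some $a_{j,t}b_{j,t}$ is backward, and by the wiring the literal $\ell_{j,t}$ is satisfied; thus every clause is satisfied and $\phi$ is satisfiable. Conversely, from a satisfying assignment choose, for each variable, a $\diomega$-ordering of $T^i_{var}$ realizing its truth value (Lemma~\ref{lem:T_var}), and for each clause a $\diomega$-ordering of $T^j_{clause}$ in which an arc of a true literal is backward and the other two are forward (Lemma~\ref{lem:T_clause}); then assemble these into a global ordering of $T_\phi$ respecting the reference-order's component structure, placing each forcing copy between its endpoints (internally $\diomega$-ordered, contributing clique number $3$) and checking that every cross-component arc is forward — so the backedge graph of $T_\phi$ is the disjoint union of the per-component backedge graphs, each of clique number $\le 3$, giving $\diomega(T_\phi)\le 3$. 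Finally, every gadget and every copy of $T_3$ has constant size (the blow-up in Lemma~\ref{lem:cut} is applied to a fixed-size tournament), so $|V(T_\phi)|=O(|\phi|)$ and the reduction is polynomial; and since an ordering with $\omega\le 3$ is a polynomial certificate checkable in $O(|V|^3)$, the problem is in NP. Combined with NP-hardness of $3$-SAT, this proves \textsc{$3$-DIOMEGA-ORDERING} is NP-complete.

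I expect the main obstacle to be precisely the assembly step and the accompanying clique-number bookkeeping. The betweenness constraints created by the forcing copies of $T_3$ make it impossible to lay the gadgets out as disjoint consecutive blocks — a clause gadget must be ``split around'' each variable gadget it is wired to, so parts of different gadgets necessarily interleave — and a careless orientation of the cross-component arcs can produce a complete bipartite backedge subgraph between the two-cliques of two interleaved gadget parts, yielding a clique of size $4$. The delicate point is therefore to choose the reference order and the global ordering (and possibly to rigidify enough of the order with extra forcing copies, or to exploit that the distinguished-arc endpoints sit in the low-clique-number ``$T$-part'' of each $\Delta(1,T,T_3)$ gadget) so that, for every admissible choice of the internal gadget orderings, all cross-component arcs are forward and no clique of size $4$ is created across components. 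Everything else is a routine consequence of Lemmas~\ref{lem:complete}, \ref{lem:T_var} and \ref{lem:T_clause}.
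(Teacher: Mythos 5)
Your skeleton (reduction from $3$-SAT via the gadgets of Lemmas~\ref{lem:T_var} and~\ref{lem:T_clause}, plus the forcing Lemma~\ref{lem:complete}) matches the paper, but your wiring is genuinely different and, as you yourself flag, incomplete at the decisive step. You couple a literal occurrence to its clause slot by order constraints ($a_{j,t}\prec w_i$ and $x_i\prec b_{j,t}$ via forcing copies of $T_3$), which does give the implication ``clause arc backward $\Rightarrow$ literal true''. But each such copy $W$ must itself sit between its two endpoints in any backedge-free layout (otherwise the arcs $a_{j,t}\Rightarrow W$ or $W\Rightarrow w_i$ become backward and $W$'s internal triangle plus one endpoint gives a $K_4$), so every clause gadget and its forcing copies must straddle the three variable gadgets it is wired to. The entire burden of the proof then shifts to the assembly: for every satisfying assignment you must exhibit one global order that simultaneously restricts to an admissible $\diomega$-ordering of each gadget with the right forward/backward pattern, satisfies all betweenness constraints, makes every cross-gadget arc forward, and creates no $K_4$ across interleaved gadgets. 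Lemmas~\ref{lem:T_var} and~\ref{lem:T_clause} only control which distinguished arcs are forward, not where the remaining gadget vertices sit, so a ``reference order'' fixed in advance need not agree with the final order on cross-gadget pairs, and a backward cross arc between interleaved pieces can complete a $K_4$ with a triangle of one gadget. This is exactly the part you leave open, and it is not routine; as written the satisfiable $\Rightarrow$ $\diomega\le 3$ direction is unproved.

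The paper avoids the problem with a different coupling that imposes no order constraints between gadgets. It lays the gadgets out as disjoint consecutive blocks $A_1\Ra\dots\Ra A_n\Ra T_3\Ra B_1\Ra\dots\Ra B_m$ (a single central copy of $T_3$ forces, via Lemma~\ref{lem:complete}, all of $V(A_i)$ before all of $V(B_j)$), and for each literal occurrence it reverses the four arcs between the two endpoints $\{a,b\}$ of the relevant variable arc and the two endpoints $\{c,d\}$ of the clause arc. Since $a,b\prec c,d$ is forced, $\{a,b,c,d\}$ induces a $K_4$ in the backedge graph exactly when both distinguished arcs are backward; this yields the same logical implication, and the converse direction reduces to observing that the only cross-block backedges are these $2\times 2$ bicliques, so a $K_4$ would need both a backward variable arc and a backward clause arc of a wired pair. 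To salvage your proposal you would either have to actually carry out the interleaved assembly (which looks hard with the stated gadget interfaces) or replace the betweenness wiring with an arc-reversal coupling of this kind.
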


\begin{proof}
    Clearly, this problem is in NP. Let us now prove it is NP-hard.

    \medskip

    We will reduce $3$-SAT to this problem.
    Let $\varphi$ be a $3$-SAT formula on variables $v_i$ for $i \in \intv{1}{n}$ with clauses $C_j = C_{j,1} \lor C_{j,2} \lor C_{j,3}$ where $C_{j,k}$ are literals.
    Without loss of generality, we can suppose no variable appears more than once in the literals of a clause.
For every variable $v_i$, we create a copy $A_i$ of $T_{var}$ (as defined in Lemma~\ref{lem:T_var}). Let $f^+_i$ and $f^-_i$ be the respective copies of $uv$ and $wx$ in $A_i$.
For every clause $C_j$, we create a copy $B_j$ of $T_{clause}$ (as defined in Lemma~\ref{lem:T_clause}). Let $e^1_{j}, e^2_{j}, e^{3}_j$ be the respective copies of $uv, wx, yz$ in $B_j$.

Let $T_3$ be defined as in Definition~\ref{def:T_k}.

We will consider the tournament $T = A_1 \Ra \dots \Ra A_n \Ra T_3 \Ra B_1 \Ra \dots \Ra B_m$ in which we revert some arcs as follows: for every literal $C_{j,k}$ corresponding to variable $v_i$
, let $ab = f^+_i$ if it is a positive literal and $ab = f^-_i$ otherwise and let $cd = e^{k}_{j}$. We revert arcs $ac, ad, bc$ and $bd$ (thus after the reversal, $ca, da, cb, db \in A(T)$).

Let us prove that $\diomega(T) = 3$ if and only if $\varphi$ is satisfiable.

    First, suppose $\diomega(T) = 3$ and let $\prec$ be a $\diomega$-ordering of $T$.
    Note that by Lemma~\ref{lem:complete}, for $i \in \intv{1}{n}$ and $j \in \intv{1}{m}$, $V(A_i) \prec V(B_j)$.
    Let $\nu$ be the assignment such that $\nu(v_i)$ if and only if the arc $f^+_i$ is forward or equivalently, if and only if the arc $f^-_i$ is backward.
    We will prove that $\nu$ is a satisfies $\varphi$.
    Let $C_j$ be a clause.
    Since $B_j$ is a copy of $T_{clause}$, there exists $k \in \{1,2,3\}$ such that $e^k_{j} = cd$ is backward, due to Lemma~\ref{lem:T_clause}.
    Let $ab = f^+_i$ if $C^k_j$ is a positive literal and $ab = f^-_i$ otherwise.
    Then, $ab$ must be forward, for otherwise $\{a,b,c,d\}$ would induce a $K_4$ in $T^\prec$ since $ca, da, cb$ and $db$ are backward.
    Thus the literal $C^k_j$ is satisfied. Thus every clause is satisfied by $\nu$. Hence $\varphi$ is satisfiable.

    Now, suppose $\varphi$ is satisfiable. We will build an ordering $\prec$ of $V(T)$ such that $\omega(T^\prec) = 3$. Let $\nu$ be an assignment satisfying $\varphi$, and for $j \in \intv{1}{m}$, let $k_j$ be such that $C_{j,k_{j}}$ is satisfied. Our ordering $\prec$ will be defined as follows:
    \begin{itemize}
        \item First come vertices of $A_1$, ordered following a $\diomega$-ordering of $V(A_1)$ in which $f^+_1$ is forward if and only if $\nu(v_1)$
        \item \dots
        \item then come vertices of $A_n$, ordered following a $\diomega$-ordering of $V(A_n)$ in which $f^+_n$ is forward if and only if $\nu(v_n)$
        \item then come vertices of $T_3$, ordered following a $\diomega$-ordering of $T_3$
        \item then come vertices of $B_1$, ordered following a $\diomega$-ordering of $V(B_1)$ in which, for $k \in \{1, 2, 3\} \setminus \{k_1\}$, $e^k_1$ is forward
        \item \dots
        \item then come vertices of $B_m$, ordered following a $\diomega$-ordering of $V(B_m)$ in which, for $k \in \{1, 2, 3\} \setminus \{k_m\}$, $e^k_m$ is forward
    \end{itemize}

    Suppose there exists $K \subset V(T)$ such that $T^\prec[K] = K_4$. $K$ cannot intersect $V(T_3)$, since $V(T_3)$ is anticomplete to $V(T) \setminus V(T_3)$ in $T^\prec$ and $\omega(T^\prec_3) = 3$.
    $K$ cannot intersect both $V(A_i)$ and $V(A_{i'})$ whenever $i \neq i'$ since $A_i$ is anticomplete with $A_{i'}$ in $T^\prec$.
    For the same reason, $K$ cannot intersect both $V(B_j)$ and $V(B_{j'})$ whenever $j \neq j'$.
    Thus, $K \subset V(A_i) \cup V(B_j)$ for some integers $i \in \intv{1}{n}$ and $j \in \intv{1}{m}$.
    Since $\omega(A^\prec_i) = 3$, $K$ intersect $B_j$, and since $\omega(B^\prec_j) = 3$, $K$ intersects $A_i$.
    Since vertices of $A_i$ have at most $2$ inneighbours in $V(B_j)$ and $V(A_i) \prec V(B_j)$, $K$ intersects $B_j$ on at most $2$ vertices.
    Similarly, since vertices of $B_j$ have at most $2$ outneighbours in $V(A_i)$ and $V(A_i) \prec V(B_j)$, $K$ intersect $A_i$ on at most $2$ vertices.
    Thus $K = \{u,v,w,x\}$ with $v \prec u$ two distinct vertices of $A_i$ and $x \prec w$ two distinct vertices of $B_j$.
    This is only possible if there exists $k \in \{1,2,3\}$ such that $wx = e^k_j$ and $uv = f^+_i$ if $C^k_j$ is a positive literal and $uv = f^-_i$ otherwise.
    Since $e^k_j$ is backward, this implies $k = k_j$. and thus $uv$ is not backward. Thus, $T^\prec[K] \neq K_4$, a contradiction. 

\end{proof}

Using Lemma~\ref{lem:uplift}, we directly get:

\begin{corollary}
    For any fixed $k \geq 3$, \textsc{$k$-DIOMEGA-ORDERING} is NP-complete.
\end{corollary}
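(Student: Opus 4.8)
The plan is as follows. Membership in NP is immediate: an ordering $\prec$ of $V(T)$ is a certificate, and $\omega(T^\prec)\le k$ is verified by inspecting all $\binom{|V(T)|}{k+1}$ vertex subsets of size $k+1$, which is polynomial since $k$ is fixed. For NP-hardness I would reduce from \textsc{$3$-DIOMEGA-ORDERING}, which we have just shown to be NP-complete. Given a tournament $T$, put $T^{(3)}=T$ and, for $i=3,\dots,k-1$, let $T^{(i+1)}=\Delta(1,T^{(i)},T_{i+1})$, where $T_{i+1}$ is the fixed constant-size tournament of Definition~\ref{def:T_k}. Since $k$ is a constant this adds only $O(1)$ vertices, so $T\mapsto T^{(k)}$ is polynomial-time, and it suffices to prove that $\diomega(T^{(k)})\le k$ if and only if $\diomega(T)\le 3$.

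For the ``if'' direction, note that the ordering constructed in the proof of Lemma~\ref{lem:uplift} shows slightly more than what is stated: $\diomega(\Delta(1,D,T_m))=m$ holds whenever $\diomega(D)\le m-1$ (placing a $\diomega$-ordering of $D$, then one of $T_m$, then the $TT_1$-vertex gives clique number exactly $m$ under this hypothesis). Hence if $\diomega(T)\le 3$ then $\diomega(T^{(3)})\le 3\le 4-1$, so $\diomega(T^{(4)})=4$, and inductively $\diomega(T^{(i)})=i$ for every $i$ with $3\le i\le k$; in particular $\diomega(T^{(k)})=k$.

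For the ``only if'' direction I would establish the complementary fact: \emph{if $\diomega(D)\ge m$ then $\diomega(\Delta(1,D,T_m))\ge m+1$}. Write $D'=\Delta(1,D,T_m)$ and let $v$ be the vertex of the $TT_1$-part, so $v\Rightarrow V(D)$, $V(D)\Rightarrow V(T_m)$ and $V(T_m)\Rightarrow v$. Suppose some ordering $\prec$ of $V(D')$ had $\omega(D'^\prec)\le m$; as $D'[V(T_m)]=T_m$ with $\diomega(T_m)=m$ this forces $\omega(D'^\prec)=m$. Since $V(T_m)\subseteq u^+\cap v^-$ and $D'[V(T_m)]=T_m$, Lemma~\ref{lem:complete} yields $u\prec v$ for every $u\in V(D)$. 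Restricting $\prec$ to $V(D)$ gives $\omega(D^\prec)\ge\diomega(D)\ge m$, hence a clique $K$ of $D^\prec$ with $|K|\ge m$; each arc $v\to u$ with $u\in K$ is then backward (all of $K$ precedes $v$), so $v$ is adjacent in $D'^\prec$ to every vertex of $K$ and $K\cup\{v\}$ is a clique of size $\ge m+1$, contradicting $\omega(D'^\prec)=m$. Iterating: if $\diomega(T)\ge 4$ then $\diomega(T^{(3)})\ge 4$, so $\diomega(T^{(4)})\ge 5$, and inductively $\diomega(T^{(i)})\ge i+1$, whence $\diomega(T^{(k)})\ge k+1>k$. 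Since this corollary is exactly the statement of Theorem~\ref{thm:k_diomega_NP}, it also completes the proof of that theorem.

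I expect the only real obstacle to be the ``only if'' direction: Lemma~\ref{lem:uplift} pins down $\diomega(\Delta(1,D,T_m))$ only when $\diomega(D)=m-1$ and is silent when $\diomega(D)$ is larger, so one must supply the complementary lower bound above. Its proof is short, but it is the one place where the structure of the construction does the work (the $TT_1$-vertex $v$ being pushed past $V(D)$ by Lemma~\ref{lem:complete}, and then completing an optimal clique of $D$); everything else — membership in NP, polynomiality of the reduction, and the ``if'' direction, which is Lemma~\ref{lem:uplift} iterated — is routine bookkeeping.
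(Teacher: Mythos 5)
Your proof is correct and follows the same route as the paper, which obtains the corollary by simply iterating Lemma~\ref{lem:uplift} on the construction $\Delta(1,\cdot,T_{i+1})$. The one thing you add --- the complementary bound that $\diomega(D)\ge m$ forces $\diomega(\Delta(1,D,T_m))\ge m+1$, proved by pushing the $TT_1$-vertex past $V(D)$ with Lemma~\ref{lem:complete} and extending an optimal clique of $D^\prec$ --- is a genuine and necessary supplement, since Lemma~\ref{lem:uplift} as stated only covers $\diomega(D)=m-1$ and the paper's one-line derivation leaves that case implicit; your argument for it is sound.
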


\section{A counterexample to a conjecture of Aboulker, Aubian, Charbit and Lopes}

In \cite{AACL23}, Aboulker, Aubian, Charbit and Lopes proposed the following conjecture, a tournament version of \Gya-Sumner conjecture:

\begin{conjecture}\label{conj:gyarfas_sumner}[Aboulker, Aubian, Charbit, Lopes, 2023]
    Let $T$ be a tournament whose one backedge graph is a forest. Let $k \in \mathbb{N}$. Tournaments not containing $T$ as a subtournament and with clique number at most $k$ have bounded dichromatic number.
\end{conjecture}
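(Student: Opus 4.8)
Conjecture~\ref{conj:gyarfas_sumner} turns out to be \emph{false}, so the task is to construct a counterexample; the plan is to exhibit a fixed tournament $F$ with a forest backedge graph, together with a sequence of tournaments $(D_i)_{i \ge 0}$ such that every $D_i$ avoids $F$ as a subtournament, has clique number $\diomega(D_i) \le 2$, and has $\dic(D_i) \to \infty$. Such a family already contradicts the conjecture for $k = 2$, and hence for every $k \ge 2$. The engine is a variant of the construction used in the proof of Lemma~\ref{lem:cut}, which will be made precise in Definition~\ref{def:pi}: a map $S \mapsto \pi(S)$, obtained from a large number of transitively ordered copies of $S$ by reversing a carefully chosen set of arcs between copies, enjoying (a) $\diomega(\pi(S)) = \diomega(S)$; (b) for every $X \subseteq V(\pi(S))$, one of $\pi(S)[X]$ and $\pi(S)[\overline{X}]$ contains a copy of $S$; and, crucially, (c) any copy of $F$ in $\pi(S)$ lies entirely inside a single copy of $S$, so that $\pi(S)$ is $F$-free whenever $S$ is.

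Granting such a $\pi$, I would take $D_0$ to be the directed triangle $\Delta(1,1,1)$ and set $D_{i+1} = \pi(D_i)$. The unique backedge graph of $D_0$ is a single edge, hence a forest; $\diomega(D_0) = 2$ and $\dic(D_0) = 2$; and choosing $F$ with more than three vertices makes $D_0$ $F$-free. Property (a) gives $\diomega(D_i) = 2$ for all $i$, so the clique-number hypothesis of the conjecture holds with $k = 2$, and property (c) together with induction gives that every $D_i$ is $F$-free.

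It then remains to show $\dic(D_i) \to \infty$, which I would do by proving $\dic(D_i) \ge i + 2$ by induction on $i$ using property (b). Suppose $D_i = \pi(D_{i-1})$ is partitioned into $c = \dic(D_i)$ acyclic sets $P_1, \dots, P_c$. Applying (b) to the bipartition $(P_1, V(D_i) \setminus P_1)$: since $D_i[P_1]$ is acyclic while $D_{i-1}$ is not (because $\dic(D_{i-1}) \ge 2$), a copy $K$ of $D_{i-1}$ must lie in $D_i[V(D_i) \setminus P_1]$; the partition restricted to $V(K)$ uses only $P_2, \dots, P_c$, so $\dic(D_{i-1}) \le c - 1$, that is $\dic(D_i) \ge \dic(D_{i-1}) + 1$. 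Since $\dic(D_0) = 2$, this gives $\dic(D_i) \ge i + 2$. Thus $(D_i)$ is a family of $F$-free tournaments with clique number $2$ and unbounded dichromatic number, refuting Conjecture~\ref{conj:gyarfas_sumner}.

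The main obstacle is property (c): arranging $\pi$ to destroy copies of $F$ while preserving (a) and (b). In the construction of Lemma~\ref{lem:cut} the reversed arcs between any two blocks form only a partial matching, so any subtournament of $\pi(S)$ meeting two blocks is, between those two blocks, complete bipartite minus a matching; a copy of $F$ spanning at least two blocks would therefore display a near-dominant bipartition of $V(F)$, and one spanning three or more blocks a nested chain of such bipartitions. The work is then to choose $F$ with a forest backedge graph that is nonetheless rigid enough to forbid all such splits --- $F$ will have to be strongly connected, and (so that $F$-freeness does not already bound $\dic$) a non-hero tournament --- and to tune the reversal pattern of $\pi$ to this particular $F$ while checking that the tuning leaves $\diomega$ unchanged and the bipartition property (b) intact. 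Producing an $F$ that is simultaneously forest-backedged and rigid in this sense is the heart of the construction.
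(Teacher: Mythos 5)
Your overall architecture is the same as the paper's: iterate a $\Pi$-type construction starting from the directed triangle to get tournaments $D_i$ with $\diomega(D_i)=2$ and $\dic(D_i)\to\infty$, and exhibit a fixed forest-backedged tournament $F$ that no $D_i$ contains. Your argument that $\dic$ grows (via the bipartition property of Lemma~\ref{lem:cut}) is sound and essentially matches the paper's, and your intuition that the reversed arcs between blocks form a partial matching, so that a copy of $F$ spanning several blocks would have to respect a rigid "matching" structure, is exactly the right mechanism --- it is what the paper formalizes as the four rules of Lemma~\ref{lem:rules}.

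However, there is a genuine gap: you never produce the tournament $F$, and you never prove your property (c), which you yourself identify as ``the heart of the construction.'' Without a concrete $F$ together with a proof that it embeds in no $D_i$, the conjecture is not refuted; everything before that point (clique number $2$, unbounded $\dic$) is the easy half. The paper closes this gap by keeping the construction $\Pi$ of Definition~\ref{def:pi} \emph{fixed} (not tuned to $F$, as you propose --- tuning would force you to re-verify that $\diomega$ and the bipartition property survive the tuning, an extra burden your sketch does not address), deriving from the matching structure four necessary conditions that any strong subtournament of some $D_k$ must satisfy for one of its $\diomega$-orderings and one choice of ``split vertex'' $x$ (Lemma~\ref{lem:rules}), and then taking $F = R_5$, the circulant tournament on five vertices (Figure~\ref{fig:circulant}). $R_5$ has a forest backedge graph, has only nine $\diomega$-orderings up to its vertex-transitivity, and a finite case check (all orderings, all choices of $x$) shows every case violates one of the four rules. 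That exhaustive verification over a concrete small tournament is the substance of the disproof, and it is precisely the part your proposal defers. Note also that your property (c) as literally stated (``every copy of $F$ lies inside a single block'') is stronger than needed; the paper only needs, and only proves, that a copy not contained in a single block is impossible for the specific $F=R_5$, with containment in a single block handled by induction on $k$.
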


The main result of this section is a disproof of this conjecture:

\begin{theorem}
    Conjecture~\ref{conj:gyarfas_sumner} is false.
\end{theorem}

In order to do so, we adapt the construction from Lemma~\ref{lem:cut}.

\begin{definition}\label{def:pi}
    Let $T$ be a tournament on $n$ vertices. We define the tournament $\Pi(T)$ and the ordering $\prec^{\Pi}$ of $V(\Pi(T))$ as follows:
    
    \medskip

    Let $m = {{2n-1} \choose n}$. We start from $2m + 1$ copies of $T$, denoted as $A_1, \dots, A_m, B, C_1, \dots, C_m$.
    Let $\varphi \colon \intv{1}{m} \to {{\intv{1}{2n+1}} \choose n}$ be a bijection and, for $i \in \intv{1}{m}$, let $\psi_j \colon V(T) \to \varphi(j)$ be a bijection.
    $\Pi(T)$ is defined from $A_1 \Ra \dots \Ra A_m \Ra B \Ra C_1 \Ra \dots \Ra C_m$ by reversing arc $uv$ with $u \in V(A_i), v \in V(C_j)$ if and only if $\psi_i(u) = \psi_j(v)$.
    $\prec^{\Pi}$ is defined as the concatenation of any $\diomega$-orderings of $A_1$, then $A_2$, \dots, then $A_m$, then $B$, then $C_1$, \dots and then $C_m$.
\end{definition}

An easy consequence of Lemma~\ref{lem:cut} is the following lemma:
\begin{lemma}
    Let $T$ be a tournament. If $\diomega(T) = 2$ then $\omega(\Pi(T)^{\prec^{\Pi}}) = 2$.
\end{lemma}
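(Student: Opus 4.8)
The plan is to work directly with the backedge graph $\Pi(T)^{\prec^{\Pi}}$, show it contains an edge but no triangle, and conclude. For the lower bound, first I would note that by definition $\prec^{\Pi}$ restricts on $V(A_1)$ to a $\diomega$-ordering of the copy $A_1$ of $T$; since $\diomega(T)=2$ this restriction is a triangle with no vertex repeated\,---\,more precisely $\omega\bigl(\Pi(T)^{\prec^{\Pi}}[V(A_1)]\bigr)=\omega(A_1^{\prec})=2$\,---\,so $\omega(\Pi(T)^{\prec^{\Pi}})\ge 2$.

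The bulk of the argument is the upper bound $\omega(\Pi(T)^{\prec^{\Pi}})\le 2$, and here I would proceed by first classifying the edges of $\Pi(T)^{\prec^{\Pi}}$ relative to the blocks $A_1,\dots,A_m,B,C_1,\dots,C_m$. Since $\prec^{\Pi}$ lists the blocks in exactly this order and the only arcs of $\Pi(T)$ that were reversed go from an $A$-block to a $C$-block, I would record: (i) the edges inside a fixed block are the backedge graph of a $\diomega$-ordering of $T$, hence triangle-free; (ii) there is no edge between two distinct $A$-blocks, between an $A$-block and $B$, between $B$ and a $C$-block, or between two distinct $C$-blocks, since every such arc points forward in $\prec^{\Pi}$; (iii) the only cross-block edges join some $A_i$ to some $C_j$, and for $u\in V(A_i)$, $v\in V(C_j)$ the pair $uv$ is an edge precisely when $\psi_i(u)=\psi_j(v)$; in particular, by injectivity of the $\psi$'s, each vertex of $A_i$ has at most one neighbour in $C_j$ and conversely.

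Then I would rule out a triangle $\{x,y,z\}$ by cases. No vertex lies in $B$, for the only edges touching $B$ stay inside $B$, so such a triangle would lie entirely in $B$, contradicting (i). If all three vertices lie in $A$-blocks, then two of them lie in a common $A_i$ (by (ii) there are no edges between distinct $A$-blocks), and the third vertex would then have two neighbours inside $A_i$\,---\,impossible whether it lies in $A_i$ itself (by (i)) or in some $C_j$ (by (iii)); the case of three vertices in $C$-blocks is symmetric. Finally, if the triangle meets both an $A$-block and a $C$-block, two of its vertices must share a block (there is no ``bipartite triangle'' of type $A$-$C$-$A$ or $C$-$A$-$C$), and we are reduced once more to a vertex with two neighbours in a single opposite block, forbidden by (iii). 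Hence $\Pi(T)^{\prec^{\Pi}}$ is triangle-free and $\omega(\Pi(T)^{\prec^{\Pi}})=2$.

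I expect no real obstacle here: the only thing to be careful about is the bookkeeping of which block pairs can carry a backward arc and the ``at most one neighbour'' consequence of the labelling\,---\,this is why the statement is genuinely an easy consequence of the construction already analysed in Lemma~\ref{lem:cut}. Note that the precise value $m=\binom{2n-1}{n}$ is irrelevant for this lemma (any number of $A$- and $C$-copies works); it is only the later, more delicate property of $\Pi(T)$\,---\,that it contains no forbidden subtournament\,---\,that forces this choice.
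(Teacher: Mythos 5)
Your proof is correct, but it takes a different route from the paper. The paper does not re-examine the backedge graph at all: it observes that since $\diomega(T)=2$ the tournament $T$ contains a directed triangle, which lets one locate the whole of $\Pi(T)$ (the $A$-blocks, the middle copy $B$, and the $C$-blocks, with reversals only between $A$- and $C$-blocks) as a subtournament of the tournament $T'$ built in the proof of Lemma~\ref{lem:cut}, in such a way that $\prec^{\Pi}$ is the restriction of the good ordering exhibited there; the bound $\omega(\Pi(T)^{\prec^{\Pi}})\le 2$ is then inherited from $\omega(T'^{\prec})\le\diomega(T)$, and the lower bound from the fact that $\Pi(T)$ contains $T$. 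You instead verify triangle-freeness of $\Pi(T)^{\prec^{\Pi}}$ from scratch: classify which block pairs can carry a backedge, note that the only cross-block edges run between an $A_i$ and a $C_j$ and that injectivity of the labellings $\psi_i,\psi_j$ gives each vertex at most one neighbour in any opposite block, and then kill every possible distribution of a triangle over the blocks. Your case analysis is complete (a triangle meeting $B$ must lie in $B$; three vertices spread over distinct non-$B$ blocks would need an edge between two $A$-blocks or two $C$-blocks; two vertices in one block force the third to have two neighbours there), so the argument is sound. What the paper's approach buys is brevity and reuse of work already done; what yours buys is self-containedness and, arguably, more rigour, since the paper's embedding of $\Pi(T)$ into $T'$ is asserted rather than spelled out (one must match label universes of different sizes and place $B$ using a directed triangle of $T$). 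Your closing remark that the value of $m$ is irrelevant to this lemma is accurate and is in fact invisible in the paper's proof. One nitpick: the phrase ``this restriction is a triangle with no vertex repeated'' in your lower-bound paragraph is garbled --- what you need, and what your displayed equality $\omega(\Pi(T)^{\prec^{\Pi}}[V(A_1)])=\omega(A_1^{\prec})=2$ already says, is simply that the backedge graph of $A_1$ under a $\diomega$-ordering contains an edge.
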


\begin{proof}

    Since $T$ contains a directed triangle, $\Pi(T)$ is included in the construction of $T'$ as defined in the proof of Lemma~\ref{lem:cut}, which by Lemma~\ref{lem:cut} has the same clique number as $T$. Since $\Pi(T)$ contains $T$, $\omega(\Pi(T)^{\prec^\Pi}) = 2$.
\end{proof}

Thus $\Pi(T)$ preserves $\diomega$.

\begin{lemma}
    Let $T$ be a tournament. $\dic(\Pi(T)) > \dic(T)$.
\end{lemma}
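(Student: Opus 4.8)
The plan is to show that $\Pi(T)$ has no proper $\dic(T)$-dicoloring; since $\dic(\Pi(T))\le\dic(T)$ would provide one (inflate an optimal dicoloring by unused colours), this yields $\dic(\Pi(T))>\dic(T)$. So I would set $k=\dic(T)$ and suppose for contradiction that $c$ is a proper $k$-dicoloring of $\Pi(T)$. The first observation I would record is that every arc reversed in the construction of $\Pi(T)$ joins some $A_i$ to some $C_j$, so each of $A_1,\dots,A_m,B,C_1,\dots,C_m$ still induces a copy of $T$; hence $c$ restricted to any one of them is a proper dicoloring of $T$ and therefore uses all $k$ colours. Defining $g_i\colon\varphi(i)\to\intv{1}{k}$ by $g_i(\psi_i(u))=c(u)$ for $u\in V(A_i)$ and $h_j\colon\varphi(j)\to\intv{1}{k}$ by $h_j(\psi_j(v))=c(v)$ for $v\in V(C_j)$, this says precisely that every $g_i$ and every $h_j$ is surjective.

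Next I would isolate the mechanism producing a monochromatic directed cycle. Suppose there are indices $i,j$ and a label $t\in\varphi(i)\cap\varphi(j)$ with $g_i(t)=h_j(t)=:\ell$. Let $a$ be the vertex of $A_i$ with $\psi_i(a)=t$ and $c_0$ the vertex of $C_j$ with $\psi_j(c_0)=t$, so $c(a)=c(c_0)=\ell$; by the reversal rule of Definition~\ref{def:pi} the arc between $a$ and $c_0$ has been reversed, so $c_0a\in A(\Pi(T))$. Since $B$ uses every colour, I may pick $b\in V(B)$ with $c(b)=\ell$; because $A_i\Ra B\Ra C_j$ and no arc incident to $B$ is reversed, $ab$ and $bc_0$ are arcs of $\Pi(T)$. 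Then $a\to b\to c_0\to a$ is a directed triangle lying entirely in colour class $\ell$, contradicting that $c$ is proper. Hence I may assume $g_i(t)\neq h_j(t)$ for all $i,j$ and all $t\in\varphi(i)\cap\varphi(j)$.

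Finally I would close with a two-set pigeonhole on the label ground set $\intv{1}{2n-1}$ (of size $2n-1$, since $\varphi$ is a bijection onto all $n$-subsets of this set and $m=\binom{2n-1}{n}$). Fix a colour $\ell$ and set $X_\ell=\{t:\exists i,\ t\in\varphi(i),\ g_i(t)=\ell\}$ and $Y_\ell=\{t:\exists j,\ t\in\varphi(j),\ h_j(t)=\ell\}$; the previous paragraph gives $X_\ell\cap Y_\ell=\emptyset$. On the other hand, for every $n$-subset $S\subseteq\intv{1}{2n-1}$ there is an index $i$ with $\varphi(i)=S$, and surjectivity of $g_i$ yields some $t\in S$ with $g_i(t)=\ell$, i.e.\ $t\in X_\ell$; thus the complement of $X_\ell$ contains no $n$-subset, whence $|X_\ell|\ge (2n-1)-(n-1)=n$, and symmetrically $|Y_\ell|\ge n$. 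But $X_\ell$ and $Y_\ell$ are then disjoint subsets of a $(2n-1)$-element set of total size at least $2n$, which is impossible. This contradiction shows $\Pi(T)$ has no proper $k$-dicoloring, so $\dic(\Pi(T))>\dic(T)$.

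I expect the only step needing genuine care to be the verification in the second paragraph that the three arcs of the triangle point the right way — that $ac_0$ has really been reversed while the arcs through $B$ have not — which is exactly what the auxiliary middle copy $B$ and the precise reversal rule of Definition~\ref{def:pi} are there to guarantee. The rest is bookkeeping: that each induced copy of $T$ must use all $k$ colours, and the final pigeonhole, which is where the value $m=\binom{2n-1}{n}$ (equivalently, the size $2n-1$ of the label set) is used.
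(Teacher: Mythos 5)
Your proof is correct and follows essentially the same route as the paper's: both rest on the facts that every colour class must meet every copy of $T$, that the pigeonhole on the $(2n-1)$-element label set forces a common label between the $A$-side and the $C$-side within a single colour class, and that this yields a monochromatic directed triangle through $B$. The only difference is presentational — the paper argues directly on one colour class and finds the triangle at once, whereas you split into a triangle case and a (necessarily impossible) disjointness case — so no substantive comparison is needed.
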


\begin{proof}

    Suppose this is not the case, and let $X$ be an acyclic subset of $\Pi(T)$ such that $\dic(\Pi(T)[\overline{X}]) < \dic(T)$.
    Thus $X$ must intersect $A_1, \dots, A_m, B, C_1, \dots, C_m$ on at least one vertex $a_1, \dots, a_m, b, c_1, \dots, c_m$.
    Let $A = \{\psi_i(a_i) \mid i \in \intv{1}{m}\}$ and $C = \{\psi_i(c_i) \mid i \in \intv{1}{m}\}$.
    $|A| \geq n$ since every subset $S \in {\intv{1}{2n - 1} \choose n}$ intersects $X$ on vertex $a_{\varphi^{-1}(S)}$.
    Similarly, $|C| \geq n$, and thus $|A \cap C| \geq 1$.
    Let $x \in A \cap C$, and let $i, j \in \intv{1}{m}$ be such that $a_i \in X \cap V(A_i)$, $c_j \in X \cap V(C_j)$ and $\psi_i(a_i) = \psi_j(c_j) = x$.
    Then $a_i \rightarrow b \rightarrow c_j \rightarrow a_i$ is a directed triangle in $C$, a contradiction.
\end{proof}

Our counterexample will then be defined recursively as $D_1 = C_3$ and $D_k = \Pi(D_{k-1})$ for $k \geq 2$. We directly get the following result as a corollary from the previous two theorems:

\begin{corollary}
    $\dic(\{D_k \mid k \geq 1\})$ is unbounded, but for any $k \geq 1$, $\diomega(D_k) = 2$.
\end{corollary}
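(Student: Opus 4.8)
The proof is an immediate two-fold induction on $k$, feeding the base case $D_1 = C_3$ into the two lemmas on $\Pi$ proved just above. First I would settle the clique-number half: I claim $\diomega(D_k) = 2$ for every $k \geq 1$. For $k = 1$ this is the standard fact $\diomega(C_3) = 2$ — the directed triangle is not transitive, so $\diomega(C_3) \geq 2$, while any ordering of its three vertices has exactly one backward arc, so $\diomega(C_3) \leq 2$. For the inductive step, assume $\diomega(D_{k-1}) = 2$. The lemma asserting that $\diomega(T) = 2$ implies $\omega(\Pi(T)^{\prec^{\Pi}}) = 2$, applied to $T = D_{k-1}$, exhibits the ordering $\prec^{\Pi}$ of $V(D_k)$ as a witness of $\diomega(D_k) \leq 2$. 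For the matching lower bound, observe from Definition~\ref{def:pi} that $\Pi(T)$ contains a copy of $T$ — the part denoted $B$, which is untouched by the arc reversals — so $D_k$ contains $D_{k-1}$, hence a directed triangle; being non-transitive, $\diomega(D_k) \geq 2$. This closes the induction and gives $\diomega(D_k) = 2$ for all $k$.

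Next I would treat the dichromatic-number half, showing $\dic(D_k) \geq k + 1$ by induction. The base case is $\dic(D_1) = \dic(C_3) = 2$, since $C_3$ is not acyclic (forcing $\dic \geq 2$) and is visibly $2$-colourable. For the step, the lemma $\dic(\Pi(T)) > \dic(T)$ with $T = D_{k-1}$ yields $\dic(D_k) = \dic(\Pi(D_{k-1})) > \dic(D_{k-1}) \geq k$, hence $\dic(D_k) \geq k+1$. Therefore $\dic(D_k) \to \infty$ as $k \to \infty$, i.e.\ $\dic(\{D_k \mid k \geq 1\})$ is unbounded, while $\diomega(D_k) = 2$ for every $k$; this is exactly the claimed statement.

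I do not expect any genuine obstacle: the substance is entirely in the two lemmas on $\Pi$, and this corollary is a formal consequence of them. The only points deserving explicit mention are the trivial base-case computations $\diomega(C_3) = \dic(C_3) = 2$, and the observation that $\Pi(T) \supseteq T$ (via the copy $B$), which makes the lower bound $\diomega(D_k) \geq 2$ transparent — and, if one prefers, also delivers $\dic(D_k) \geq \dic(D_{k-1})$ for free by monotonicity of $\dic$ under subtournaments, before even invoking the strict-increase lemma. Finally, I would record in passing the payoff for Conjecture~\ref{conj:gyarfas_sumner}: since $\diomega$ is monotone under taking subtournaments, no $D_k$ contains any tournament $T$ with $\diomega(T) \geq 3$, so fixing a suitable such $T$ whose backedge graph is a forest makes $\{D_k\}$ a family avoiding $T$, with clique number $2$, yet with unbounded dichromatic number.
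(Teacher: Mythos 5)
Your proof of the corollary itself is correct and is exactly the paper's (implicit) argument: the paper simply asserts the corollary follows ``directly'' from the two lemmas on $\Pi$, and your induction from $D_1 = C_3$, using $\omega(\Pi(T)^{\prec^{\Pi}}) = 2$ for the upper bound, the copy $B$ of $T$ inside $\Pi(T)$ for the lower bound, and $\dic(\Pi(T)) > \dic(T)$ for unboundedness, is the right way to spell that out. One warning about your closing aside, which is not part of the corollary but is genuinely wrong: there is no tournament $T$ with $\diomega(T) \geq 3$ having a backedge graph that is a forest, since a forest backedge graph has clique number at most $2$ and hence witnesses $\diomega(T) \leq 2$. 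Consequently the refutation of Conjecture~\ref{conj:gyarfas_sumner} cannot be obtained by the monotonicity argument you sketch; the paper instead takes the tournament $R_5$ (which has $\diomega(R_5) = 2$ and a forest backedge graph) and must work considerably harder, via Lemma~\ref{lem:rules}, to show that $R_5$ is not a subtournament of any $D_k$.
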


We will prove that there exists a tournament with a backedge graph that is a forest which does not appear as a subgraph of any $D_k$.
In order to do so, we need this sufficient condition on subtournaments of $\{D_k \mid k \geq 1\}$.

\begin{lemma}\label{lem:rules}
    Let $T$ be a strong tournament that is a subgraph of $D_k$ for some $k \geq 1$. Then $T$ admits a $\diomega$-ordering $\prec$ and a vertex $x$ satisfying the following four rules:
    \begin{enumerate}
        \item there exists $y \prec x$
        \item for any $a, b, c, d$ with $a, b \prec x \preceq c, d$, if $ca,da,cb \in A(T)$ then $db \in A(T)$
        \item for any $a \preceq u \prec w \preceq b \prec x \preceq v$, if there is an  $ab$-path in $T^\prec[\{y \mid y \prec x\}]$, either $uv \in A(T)$ or $wv \in A(T)$
        \item for any $v \prec x \preceq a \preceq u \prec w \preceq b$ if there is an $ab$-path in $T^\prec[\{y \mid x \preceq y\}]$, either $vu \in A(T)$ or $vw \in A(T)$
    \end{enumerate}
\end{lemma}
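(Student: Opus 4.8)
The plan is to argue by induction on $k$, exploiting the recursion $D_1 = C_3$, $D_k = \Pi(D_{k-1})$; more precisely I prove ``for every $k\ge 1$ and every strong subtournament $T$ of $D_k$, the conclusion holds'', which is equivalent to the lemma. Observe first that a strong tournament has either one vertex or at least three, and since Rule~1 fails when $|V(T)|=1$ we may assume $|V(T)|\ge 3$; then $T$ contains a directed triangle, so $\diomega(T)=2$ (it is at most $\diomega(D_k)=2$, $T$ being a subtournament). For the base case $k=1$, the only strong subtournament of $C_3$ on at least three vertices is $C_3$ itself, say $p\to q\to r\to p$; I take $\prec$ to be $p\prec q\prec r$ (whose backedge graph is the single edge $pr$, so it is a $\diomega$-ordering) and $x=r$. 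Rules~1 and~2 are then immediate, and Rules~3 and~4 are vacuous since $T^\prec$ has no edge inside $\{p,q\}$ and nothing lies strictly after $r$.

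For the inductive step, write $D_k=\Pi(D_{k-1})$ with its blocks $A_1,\dots,A_m,B,C_1,\dots,C_m$ (each a copy of $D_{k-1}$) laid out as $A_1\Ra\cdots\Ra A_m\Ra B\Ra C_1\Ra\cdots\Ra C_m$, in which only arcs between some $A_i$ and some $C_j$ are ever reversed, and recall that $\prec^\Pi$ concatenates fixed $\diomega$-orderings of the blocks in that order. The key preliminary remark is that every arc not internal to a block and not joining an $A$-block to a $C$-block points forward along the block order, hence is never reversed. Consequently, if $T$ meets no $C$-block then $V(T)\subseteq A_1\cup\cdots\cup A_m\cup B$, and since all arcs between these pieces point forward a strong $T$ must lie inside a single one of them; symmetrically if $T$ meets no $A$-block. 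In both of these cases $T$ is isomorphic to a strong subtournament of $D_{k-1}$ and the induction hypothesis applies verbatim (the four rules being intrinsic to the triple $(T,\prec,x)$). So from now on $T$ meets some $A_i$ and some $C_j$.

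In that remaining case I take $\prec$ to be $\prec^\Pi$ restricted to $V(T)$, which is a $\diomega$-ordering of $T$: by the lemma above $\omega(\Pi(D_{k-1})^{\prec^\Pi})=2$ (using $\diomega(D_{k-1})=2$), so $\omega(T^\prec)\le 2=\diomega(T)$. I let $x$ be the $\prec$-first vertex of $T$ lying in a $C$-block. Since all $A$-blocks and $B$ precede all $C$-blocks, ``$y\prec x$'' means ``$y$ lies in an $A$-block or in $B$'' and ``$x\preceq y$'' means ``$y$ lies in a $C$-block''; as $T$ has an $A$-vertex, Rule~1 holds. For Rule~2, if $a\in B$ then no arc goes from a $C$-vertex to $a$ (the relation $B\Ra C$ is never reversed), so $ca,cb\in A(T)$ forces $a,b$ into $A$-blocks; then $ca,da,cb\in A(T)$ are all reversed arcs, yielding $\psi_{i_1}(a)=\psi_{j_1}(c)=\psi_{i_2}(b)$ and $\psi_{i_1}(a)=\psi_{j_2}(d)$ for the appropriate bijections $\psi$, hence $\psi_{i_2}(b)=\psi_{j_2}(d)$, which is exactly the condition making $db$ a reversed arc, so $db\in A(T)$.

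For Rules~3 and~4 the point is that the only edges of $T^\prec$ joining two distinct blocks are the pairs $\{u,v\}$ with $u\in A_i$, $v\in C_j$ and $\psi_i(u)=\psi_j(v)$; hence $T^\prec[\{y\mid y\prec x\}]$ and $T^\prec[\{y\mid x\preceq y\}]$ are disjoint unions of internal backedge graphs of single blocks. An $ab$-path as in the hypothesis therefore confines $a,b$ to one block, and since each block is a contiguous interval of $\prec^\Pi$ the intermediate vertices $u,w$ lie in that same block. If this block is $B$ the required arcs hold trivially ($B\Ra C$); if it is some $A_i$ (resp.\ $C_j$), then membership of $uv$, $wv$ in $A(T)$ is equivalent to $\psi_i(u)\ne\psi_j(v)$, $\psi_i(w)\ne\psi_j(v)$ (resp.\ the symmetric statements for $vu$, $vw$), and since $u\ne w$ and $\psi_i$ (resp.\ $\psi_j$) is injective, at most one of $u,w$ can agree with $v$, which is what is needed. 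The only delicate part of the whole argument is this bookkeeping around the reversal rule and the ordering conventions — verifying that an arc from a $C$-vertex to an $A$-vertex of $T$ is precisely a $\psi$-agreement, and that no further cross-block edges appear in $T^\prec$; the path hypotheses in Rules~3 and~4 serve only to pin $a$ and $b$ to a common block, and there is no deeper combinatorial obstacle.
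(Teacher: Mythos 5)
Your proof is correct and follows essentially the same route as the paper's: restrict $\prec^{\Pi}$ to $V(T)$, take $x$ to be the first vertex lying in a $C$-block, and derive all four rules from the block structure together with the injectivity of the maps $\psi_i$. The only cosmetic difference is that you induct on $k$ where the paper instead chooses $k$ minimal with $T$ a subtournament of $D_k$ (so that $T$ cannot live inside a single block); these are interchangeable, and your treatment of the base case and of Rule~2 via the ``$a\in B$ is impossible'' observation matches the paper's Claim on the locations of $w$ and $y$.
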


\begin{proof}

    Let $k$ be minimum such that there exists $X$ with $D_k[X] = T$.
    It is straightforward to check the lemma whenever $k = 1$, and thus we can suppose $k \geq 2$.
    Let $m, n, A_1, \dots, A_m, B, C_1, \dots, C_m$ and $\prec^{\Pi}$ be defined as in Definition~\ref{def:pi}.
    Let $\prec = \prec^\Pi_{\mid X}$.
    $X$ must intersect one of $(C_i)_{i \in \intv{1}{m}}$ for $T$ is strong and $T$ is not a subtournament of any $A_i$ nor of $B$.
    Let $x$ be the minimum vertex (relative to order $\prec$) in $X \cap \bigcup_{i \in \intv{1}{m}} V(C_i)$.
    We will prove that $\prec$ and $x$ satisfy the four rules.
    In order to do so, we will first need the following claim:

    \begin{claim}\label{clm:wxy}
        Let $w, y \in X$ such that $w \prec x \preceq y$ and $yz \in A(D_k)$. There exist $i \in \intv{1}{m}$ such that $w \in V(A_i)$ and  $j \in \intv{1}{m}$ such that $y \in V(C_j)$.
    \end{claim}

    \begin{proofclaim}

        Since $w \prec x$ and $x$ is the leftmost vertex in $X \cap \bigcup_{i \in \intv{1}{m}} V(C_i)$, either $w \in V(B)$ or $w \in \bigcup_{i \in \intv{1}{m}} V(A_i)$.
        Since $x \preceq y$, there exists $j \in \intv{1}{m}$ such that $y \in V(C_j)$.
        This, combined with the fact that $yw \in A(D_k)$ implies that $w \notin V(B)$.
        Thus, there exists $i \in \intv{1}{m}$ such that $w \in V(A_i)$.

    \end{proofclaim}

    \medskip

    \textbf{Rule 1:}

    If there does not exist any $y \prec x$, then either $T$ is not strong or $T$ is a subtournament of some $C_i$, a contradiction.

    \medskip

    \textbf{Rule 2:}

    Let  $a, b, c, d \in X$ with $a, b \prec x \preceq c, d$ and $ca,da,cb \in A(D_k)$.
    By Claim~\ref{clm:wxy}, there exists $i, i', j, j' \in \intv{1}{m}$ such that $a \in V(A_i), b \in V_{i'}, c \in V(C_j)$ and $d \in V(C_{j'})$.
    Then, the fact that $ca, da, cb \in A(D_k)$ implies that $\psi_i(a) = \psi_j(c)$, $\psi_i(a) = \psi_{j'}(d)$ and $\psi_j(c) = \psi_{i'}(b)$.
    Hence $\psi_{i'}(b) = \psi_{j'}(d)$, and thus $db \in A(D_k)$.

    \medskip

    \textbf{Rule 3:}
    
    Suppose, by contradiction, that there exist $a,b,u,v,w$ such that there is an  $ab$-path in $T^\prec[\{y \mid y \prec x\}]$
            and $a \preceq u \prec w \preceq b \prec x \preceq v$,  $vu \in A(D_k)$ and $vw \in A(D_k)$.
    By Claim~\ref{clm:wxy}, there exists $i'$ such that $u \in V(A_{i'})$ and $j$ such that $v \in V(C_j)$.
    Since $a \preceq u$, there exists $i$ such that $a \in V(A_i)$, and since there exists an $ab$-path in $T^\prec[\{y \mid y \prec x\}]$, $b \in V(A_i)$.
    But vertices of $A_i$ are continguous in the ordering $\prec$, thus $u, w \in V(A_i)$.
    Since $vu \in A(D_k)$, $\psi_i(u) = \psi_j(v)$.
    Since $vw \in A(D_k)$, $\psi_i(w) = \psi_j(v)$.
    Thus $\psi_i(u) = \psi_i(w)$, a contradiction to the fact that $\psi_i$ is a bijection.

    \medskip

    \textbf{Rule 4:}

        Suppose, by contradiction, that there exist $a,b,u,v,w$ such that there is an $ab$-path in $T^\prec[\{y \mid x \preceq y\}]$, $v \prec x \preceq a \preceq u \prec w \preceq b$, $uv \in A(D_k)$ and $wv \in A(D_k)$.
    By Claim~\ref{clm:wxy}, there exists $i$ such that $v \in V(A_{i})$ and $j'$ such that $w \in V(C_{j'})$.
    Since $w \preceq b$, there there exists $j$ such that $b \in V(C_j)$, and since there exists an $ab$-path in $T^\prec[\{y \mid x \preceq y\}]$, $a \in V(C_j)$.
    But vertices of $C_j$ are continguous in the ordering $\prec$, thus $u, w \in V(C_j)$.
    Since $uv \in A(D_k)$, $\psi_i(v) = \psi_j(u)$.
    Since $vw \in A(D_k)$, $\psi_i(v) = \psi_j(w)$.
    Thus $\psi_j(u) = \psi_j(w)$, a contradiction to the fact that $\psi_j$ is a bijection.
\end{proof}

    Let $R_5$ be the tournament of Figure~\ref{fig:circulant}:

    \begin{figure}[H]
     \centering
     \begin{tikzpicture}[line cap=round,line join=round,>=triangle 45, scale=1.2]

         \vertex (1) at (1,0) {$1$};
         \vertex (2) at (0.309,0.951) {$2$};
         \vertex (3) at (-0.809,0.588) {$3$};
         \vertex (4) at (-0.809,-0.588) {$4$};
         \vertex (5) at (0.309,-0.951) {$5$};

         \draw[->-, bend right=30] (1) to (2);
         \draw[->-] (1) to (3);
         \draw[->-, bend right=30] (2) to (3);
         \draw[->-] (2) to (4);
         \draw[->-, bend right=30] (3) to (4);
         \draw[->-] (3) to (5);
         \draw[->-, bend right=30] (4) to (5);
         \draw[->-] (4) to (1);
         \draw[->-, bend right=30] (5) to (1);
         \draw[->-] (5) to (2);

     \end{tikzpicture}
        \caption{$R_5$}\label{fig:circulant}
    \end{figure}

    We can now conclude this section with our main theorem.

\begin{theorem}
    Conjecture~\ref{conj:gyarfas_sumner} is false.
\end{theorem}

\begin{proof}

    We will proceed by proving that $R_5$ has a backedge graph that is a forest, yet is not a subtournament of any $D_k$, for $k \geq 1$.
    In order to do so, let us first enumerate all backedge graphs of $R_5$ using the Python code in Figure~\ref{fig:cex_code} (note that we restrict ourselves to $\diomega$-orderings in which vertex $1$ stays at first position, since $R_5$ is vertex-transitive).

\begin{figure}[H]
    \centering
\begin{python}
import itertools

T = [
  [0,1,1,0,0],
  [0,0,1,1,0],
  [0,0,0,1,1],
  [1,0,0,0,1],
  [1,1,0,0,0]
]

for P in itertools.permutations(range(5)):
  omega_at_most_two = True
  for i in range(5):
    for j in range(i + 1,5):
      for k in range(j + 1, 5):
        if T[P[j]][P[i]] and T[P[k]][P[j]] and T[P[k]][P[i]]:
          omega_at_most_two = False
  if P[0] == 0 and omega_at_most_two:
    print(P)
\end{python}
    \caption{A Python code enumerating all $\diomega$-orderings of $R_5$.}\label{fig:cex_code}
\end{figure}

    \begin{figure}[H]
        \centering
        \begin{tikzpicture}[scale=1.1]

\begin{scope}[xshift=3cm,yshift=0cm]
 \vertex (0) at (0,0) {$1$};
 \vertex (1) at (1,0) {$2$};
 \vertex (2) at (2,0) {$3$};
 \vertex (3) at (3,0) {$4$};
 \vertex (4) at (4,0) {$5$};

\draw[bend right=30] (0) to (3);
\draw[bend left=30] (0) to (4);
\draw[bend right=30] (1) to (4);
\end{scope}

\begin{scope}[xshift=0cm,yshift=3cm]
 \vertex (0) at (0,0) {$1$};
 \vertex (1) at (1,0) {$2$};
 \vertex (3) at (2,0) {$4$};
 \vertex (2) at (3,0) {$3$};
 \vertex (4) at (4,0) {$5$};

\draw[bend left=30] (0) to (3);
\draw[bend left=30] (0) to (4);
\draw[bend right=30] (1) to (4);
\draw[bend right=30] (3) to (2);
\end{scope}

\begin{scope}[xshift=6cm,yshift=3cm]
 \vertex (0) at (0,0) {$1$};
 \vertex (1) at (1,0) {$2$};
 \vertex (3) at (2,0) {$4$};
 \vertex (4) at (3,0) {$5$};
 \vertex (2) at (4,0) {$3$};

\draw[bend left=30] (0) to (3);
\draw[bend right=30] (0) to (4);
\draw[bend left=30] (1) to (4);
\draw[bend left=30] (3) to (2);
\draw[bend right=30] (4) to (2);
\end{scope}

\begin{scope}[xshift=0cm,yshift=6cm]
 \vertex (0) at (0,0) {$1$};
 \vertex (2) at (1,0) {$3$};
 \vertex (1) at (2,0) {$2$};
 \vertex (3) at (3,0) {$4$};
 \vertex (4) at (4,0) {$5$};

\draw[bend right=30] (0) to (3);
\draw[bend left=30] (0) to (4);
\draw[bend right=30] (2) to (1);
\draw[bend left=30] (1) to (4);
\end{scope}

\begin{scope}[xshift=6cm,yshift=6cm]
 \vertex (0) at (0,0) {$1$};
 \vertex (2) at (1,0) {$3$};
 \vertex (3) at (2,0) {$4$};
 \vertex (1) at (3,0) {$2$};
 \vertex (4) at (4,0) {$5$};

\draw[bend left=30] (0) to (3);
\draw[bend left=30] (0) to (4);
\draw[bend left=30] (2) to (1);
\draw[bend right=30] (3) to (1);
\draw[bend right=30] (1) to (4);
\end{scope}

\begin{scope}[xshift=0cm,yshift=9cm]
 \vertex (0) at (0,0) {$1$};
 \vertex (2) at (1,0) {$3$};
 \vertex (3) at (2,0) {$4$};
 \vertex (4) at (3,0) {$5$};
 \vertex (1) at (4,0) {$2$};

\draw[bend left=30] (0) to (3);
\draw[bend right=30] (0) to (4);
\draw[bend right=30] (2) to (1);
\draw[bend left=30] (3) to (1);
\end{scope}

\begin{scope}[xshift=6cm,yshift=9cm]
 \vertex (0) at (0,0) {$1$};
 \vertex (3) at (1,0) {$4$};
 \vertex (1) at (2,0) {$2$};
 \vertex (2) at (3,0) {$3$};
 \vertex (4) at (4,0) {$5$};

\draw[bend right=30] (0) to (3);
\draw[bend left=30] (0) to (4);
\draw[bend right=30] (3) to (1);
\draw[bend left=30] (3) to (2);
\draw[bend left=30] (1) to (4);
\end{scope}

\begin{scope}[xshift=0cm,yshift=12cm]
 \vertex (0) at (0,0) {$1$};
 \vertex (3) at (1,0) {$4$};
 \vertex (1) at (2,0) {$2$};
 \vertex (4) at (3,0) {$5$};
 \vertex (2) at (4,0) {$3$};

\draw[bend right=30] (0) to (3);
\draw[bend left=30] (0) to (4);
\draw[bend right=30] (3) to (1);
\draw[bend left=30] (3) to (2);
\draw[bend right=30] (1) to (4);
\draw[bend right=30] (4) to (2);
\end{scope}

\begin{scope}[xshift=6cm,yshift=12cm]
 \vertex (0) at (0,0) {$1$};
 \vertex (3) at (1,0) {$4$};
 \vertex (4) at (2,0) {$5$};
 \vertex (1) at (3,0) {$2$};
 \vertex (2) at (4,0) {$3$};

\draw[bend right=30] (0) to (3);
\draw[bend left=30] (0) to (4);
\draw[bend left=30] (3) to (1);
\draw[bend right=30] (3) to (2);
\draw[bend left=30] (4) to (2);
\end{scope}

        \end{tikzpicture}
        \caption{All $\diomega$-orderings of $R_5$}\label{fig:all_diomega}
    \end{figure}

\bigskip

The list of all such $\diomega$-orderings can be found in Figure~\ref{fig:all_diomega}. Note that the backedge graph corresponding to the order $1 \prec 2 \prec 3 \prec 4 \prec 5$ is a forest.

\begin{figure}[H]
    \begin{center}
        \begin{tabular}{ |c c c| }

\hline
$\diomega$-ordering $\prec$ & $x$ & broken rule\\ 
\hline
$1 \prec 2 \prec 3 \prec 4 \prec 5$ & 1 & rule $1$\\
& 2 & rule $4$ with $a = 2, b = 5, u = 4, v = 1$ and $w = 5$\\
& 3 & rule $2$ with $a = 1, b = 2, c = 5 $ and $d = 4$\\
& 4 & rule $2$ with $a = 1, b = 2, c = 5 $ and $d = 4$\\
& 5 & rule $3$ with $a = 1, b = 4, u = 1, v = 5$ and $w = 2$\\
\hline
$1 \prec 2 \prec 4 \prec 3 \prec 5$ & 1 & rule $1$\\
& 2 & rule $4$ with $a = 2, b = 5, u = 4, v = 1$ and $w = 5$\\
& 3 & rule $3$ with $a = 1, b = 4, u = 1, v = 5$ and $w = 2$\\
& 4 & rule $2$ with $a = 1, b = 2, c = 5 $ and $d = 4$\\
& 5 & rule $3$ with $a = 1, b = 4, u = 1, v = 5$ and $w = 2$\\
\hline
$1 \prec 2 \prec 4 \prec 5 \prec 3$ & 1 & rule $1$\\
& 2 & rule $4$ with $a = 2, b = 5, u = 4, v = 1$ and $w = 5$\\
& 3 & rule $3$ with $a = 1, b = 5, u = 4, v = 3$ and $w = 5$\\
& 4 & rule $2$ with $a = 1, b = 2, c = 5 $ and $d = 4$\\
& 5 & rule $3$ with $a = 1, b = 4, u = 1, v = 5$ and $w = 2$\\
\hline
$1 \prec 3 \prec 2 \prec 4 \prec 5$ & 1 & rule $1$\\
& 2 & rule $4$ with $a = 2, b = 5, u = 4, v = 1$ and $w = 5$\\
& 3 & rule $4$ with $a = 3, b = 5, u = 4, v = 1$ and $w = 5$\\
& 4 & rule $2$ with $a = 1, b = 2, c = 5 $ and $d = 4$\\
& 5 & rule $3$ with $a = 1, b = 4, u = 1, v = 5$ and $w = 2$\\
\hline
$1 \prec 3 \prec 4 \prec 2 \prec 5$ & 1 & rule $1$\\
& 2 & rule $3$ with $a = 1, b = 4, u = 3, v = 2$ and $w = 4$\\
& 3 & rule $4$ with $a = 3, b = 5, u = 4, v = 1$ and $w = 5$\\
& 4 & rule $4$ with $a = 4, b = 5, u = 4, v = 1$ and $w = 5$\\
& 5 & rule $3$ with $a = 1, b = 2, u = 1, v = 5$ and $w = 2$\\
\hline
$1 \prec 3 \prec 4 \prec 5 \prec 2$ & 1 & rule $1$\\
& 2 & rule $3$ with $a = 1, b = 4, u = 3, v = 2$ and $w = 4$\\
& 3 & rule $4$ with $a = 3, b = 2, u = 4, v = 1$ and $w = 5$\\
& 4 & rule $4$ with $a = 4, b = 2, u = 4, v = 1$ and $w = 5$\\
& 5 & rule $3$ with $a = 1, b = 4, u = 3, v = 2$ and $w = 4$\\
\hline
$1 \prec 4 \prec 2 \prec 3 \prec 5$ & 1 & rule $1$\\
& 2 & rule $4$ with $a = 2, b = 5, u = 2, v = 4$ and $w = 3$\\
& 3 & rule $3$ with $a = 1, b = 2, u = 1, v = 5$ and $w = 2$\\
& 4 & rule $4$ with $a = 4, b = 5, u = 4, v = 1$ and $w = 5$\\
& 5 & rule $3$ with $a = 1, b = 2, u = 1, v = 5$ and $w = 2$\\
\hline
$1 \prec 4 \prec 2 \prec 5 \prec 3$ & 1 & rule $1$\\
& 2 & rule $4$ with $a = 2, b = 3, u = 2, v = 4$ and $w = 3$\\
& 3 & rule $3$ with $a = 1, b = 5, u = 4, v = 3$ and $w = 5$\\
& 4 & rule $4$ with $a = 4, b = 5, u = 4, v = 1$ and $w = 5$\\
& 5 & rule $3$ with $a = 1, b = 2, u = 1, v = 5$ and $w = 2$\\
\hline
$1 \prec 4 \prec 5 \prec 2 \prec 3$ & 1 & rule $1$\\
& 2 & rule $2$ with $a = 4, b = 5, c = 3 $ and $d = 2$\\
& 3 & rule $3$ with $a = 1, b = 5, u = 4, v = 3$ and $w = 5$\\
& 4 & rule $4$ with $a = 4, b = 2, u = 4, v = 1$ and $w = 5$\\
& 5 & rule $4$ with $a = 5, b = 3, u = 2, v = 4$ and $w = 3$\\
\hline

        \end{tabular}
    \end{center}
    \caption{For every $\diomega$-ordering of $R_5$ and every choice of vertex $x$, which rule of Lemma~\ref{lem:rules} is broken?}\label{fig:big_table}
\end{figure}

\bigskip

The table in Figure~\ref{fig:big_table} returns, for each $\diomega$-ordering and each choice of vertex $x$, a rule of Lemma~\ref{lem:rules} that is broken.
By Lemma~\ref{lem:rules}, $R_5$ is never a subtournament of $D_k$ for $k \geq 1$. Thus there exists a tournament with a backedge graph that is a forest such that there are tournaments of clique number $2$ and arbitrarily large dichromatic number not containing it, contradicting Conjecture~\ref{conj:gyarfas_sumner}. 
\end{proof}

\section{Conclusion and future works}

The main problem that remains open is the following conjecture:

\begin{conjecture}
\textsc{$2$-DIOMEGA-ORDERING} is NP-complete.
\end{conjecture}

A way of attacking this problem could be to see it as a problem on strings. For example, let us consider the following problem:

\bigskip

\textsc{PERMUTATION-AVOIDING-SMALL-SUBWORDS}

\textbf{Input:} A finite set $\Sigma$, and $S \subseteq \Sigma^{\leq 3}$.

\textbf{Output:} Does there exist a permutation $P$ of $\Sigma$ such that no $w \in S$ is a subsequence of $P$?

\bigskip

Given a tournament $T$, one can consider the instance with $\Sigma = V(T)$ and $S = \{wvu \mid uv, vw, uw \in A(T)\}$. Then, a permutations $P$ of $\Sigma$ such that no $w \in S$ is a subsequence of $P$ are exactly orderings $\prec$ of $V(T)$ with $\omega(T^\prec) = 2$. Thus if \textsc{PERMUTATION-AVOIDING-SMALL-SUBWORDS} was solvable in polynomial time, so would be \textsc{$2$-DIOMEGA-ORDERING}.

Unfortunately, one can prove that \textsc{PERMUTATION-AVOIDING-SMALL-SUBWORDS}. However, one could study slight variations of this problem: for example, we can assume that if $abc, dbe \in S$ then $abe, dbc \in S$.

Another problem that is mentioned by Nguyen, Scott and Seymour in \cite{NSS23} is the question of whether \textsc{$k$-DIOMEGA-ORDERING} is in co-NP. This remains open.

\bigskip

While Conjecture~\ref{conj:gyarfas_sumner} is now disproved, one could ask which tournaments are such that the class of tournaments not containing them is $\dic$-bounded: we could not find other counterexamples on $5$ vertices.

\subsubsection*{Acknowledgements}

I would like to thank Pierre Aboulker and Pierre Charbit for the precious discussion that led to considering whether I could modify the construction of Lemma~\ref{lem:cut} to disprove Conjecture~\ref{conj:gyarfas_sumner}. I would also like to thank Louis Jachiet for the idea of the proof that \textsc{PERMUTATION-AVOIDING-SMALL-SUBWORDS} is NP-complete.

\bibliographystyle{plain}
\bibliography{ms} 
\end{document}